\documentclass[11pt,twoside, leqno]{article}

\usepackage{mathrsfs}
\usepackage{amssymb}
\usepackage{amsmath}
\usepackage{amsthm}
\usepackage{amsfonts}
\usepackage{color}

\pagestyle{myheadings}\markboth{\footnotesize\rm\sc Li Bo}
{\footnotesize\rm\sc Marcinkiewicz integrals}

\allowdisplaybreaks

\textwidth=16cm
\textheight=21.5cm
\oddsidemargin 0.35cm
\evensidemargin 0.35cm

\parindent=13pt

\def\rr{{\mathbb R}}
\def\rn{{{\rr}^n}}

\def\fz{\infty}

\def\az{\alpha}

\renewcommand\tilde{\widetilde}

\def\supp{{\rm{\,supp\,}}}

\def\ls{\lesssim}
\def\lz{\lambda}

\def\bz{\beta}

\def\gz{{\gamma}}

\def\sz{\sigma}

\def\hs{\hspace{0.3cm}}

\def\r{\right}
\def\lf{\left}

\def\bint{{\ifinner\rlap{\bf\kern.30em--}
\int\else\rlap{\bf\kern.35em--}\int\fi}\ignorespaces}

\def\sbint{{\ifinner\rlap{\bf\kern.32em--}
\hspace{0.078cm}\int\else\rlap{\bf\kern.45em--}\int\fi}\ignorespaces}

\newtheorem{theorem}{Theorem}[section]
\newtheorem{lemma}[theorem]{Lemma}
\newtheorem{corollary}[theorem]{Corollary}

\theoremstyle{definition}

\newtheorem{definition}[theorem]{Definition}
\numberwithin{equation}{section}

\numberwithin{equation}{section}



\numberwithin{equation}{section}

\begin{document}

\arraycolsep=1pt

\title{\Large\bf Boundedness of parameterized Marcinkiewicz integrals with variable kernels on Hardy spaces and weak Hardy spaces
\footnotetext{\hspace{-0.35cm}
{\it 2010 Mathematics Subject Classification}.
{Primary 42B25; Secondary 42B30.}
\endgraf{\it Key words and phrases.}  Marcinkiewicz integral, Hardy space, variable kernel.
}}
\author{Li Bo
}
\date{  }
\maketitle

\vspace{-0.8cm}

\begin{minipage}{14.5cm}\small
{
\noindent
{\bf Abstract.}
In this paper, by using the atomic decomposition theory of Hardy space and weak Hardy space,
the author establishes the boundedness of parameterized Marcinkiewicz integral with variable kernel
on these spaces, under the Dini condition or H\"{o}rmander condition imposed on kernel.
}
\end{minipage}



\section{Introduction\label{s1}}
Let $S^{n-1}$ be the unit sphere in the $n$-dimensional Euclidean space $\rn \ (n\ge2)$ with normalized Lebesgue measure $d\sigma$.
A function $\Omega(x,\,z)$ defined on $\rn\times\rn$ is said to be in $L^\fz(\rn)\times L^q(S^{n-1})$ with $q\ge1$,
if $\Omega(x,\,z)$ satisfies the following conditions:
\begin{align}\label{e1.1}
\Omega(x,\,\lz z)=\Omega(x,\,z) \ \mathrm{for} \ \mathrm{any} \ x,\,z\in\rn \ \mathrm{and} \ \lz\in(0,\,\,\fz),
\end{align}
\begin{align}\label{e1.2}
\int_{S^{n-1}}\Omega(x,\,z)\,d\sigma(z')=0 \ \mathrm{for} \ \mathrm{any} \ x\in\rn,
\end{align}
\begin{align}\label{e1.3}
\sup_{\substack{x\in\rn \\ r\ge0}}\lf(\int_{S^{n-1}}|\Omega(x+rz',\,z')|^q\,d\sigma(z')\r)^{1/q}<\fz,
\end{align}
where $z':=z/{|z|}$ for any $z\neq{\mathbf{0}}$.
The {\it {singular integral operator with variable kernel}} is defined by
\begin{align*}
T_\Omega(f)(x):=\mathrm{p.v.}\int_\rn \frac{\Omega(x,\,x-y)}{|x-y|^n}f(y)\,dy.
\end{align*}

In 1955 and 1956, Calder\'{o}n and Zygmund \cite{cz55,cz56} investigated the $L^p$ boundedness of $T_\Omega$.
They found that these operators are closely related to the problem about the second-order linear elliptic
equations with variable coefficients.
In 2011, Chen and Ding \cite{cd11} consider the same problem for
the {\it{parameterized Marcinkiewicz integral with variable kernel}} $\mu_\Omega^\rho$ defined by
\begin{align*}
\mu_\Omega^\rho(f)(x):=\lf(\int_0^\fz\lf|\int_{|x-y|\le t}
 \frac{\Omega(x,\,x-y)}{|x-y|^{n-\rho}}f(y)\,dy\r|^2\frac{dt}{t^{2\rho+1}}\r)^{1/2},
\end{align*}
where $0<\rho<n$.
For any $1\le q\le\fz$, $q'$ denotes the {\it{conjugate index}} of $q$, namely, $1/q+1/{q'}=1$.
They obtained the following result:
\begin{flushleft}
{\bf{Theorem A.}}
{\it{
Let $0<\rho<n$, $1<p\le2$ and $q>p'(n-1)/n$.
If $\Omega(x,z)\in L^\fz(\rn)\times L^q(S^{n-1})$, then $\mu_\Omega^\rho$ is bounded on $L^p(\rn).$
}}
\end{flushleft}

On the other hand, as everyone knows, many important operators
are better behaved on Hardy space $H^p(\rn)$ than on Lebesgue $L^p(\rn)$
space in the range $p\in(0,\,1]$.
For example, when $p\in(0,\,1]$, the Riesz transforms are bounded on the Hardy space $H^p(\rn)$,
but not on the corresponding Lebesgue space $L^p(\rn)$.
Therefore, one can consider $H^p(\rn)$ to be a very natural replacement for $L^p(\rn)$ when $p\in(0,\,1]$.
We refer to \cite{l95} for a complete survey of the real-variable theory of Hardy space.
Motivated by this, it is a natural and interesting problem to ask,
when $p\in(0,\,1]$, whether the operator $\mu_\Omega^\rho$ is bounded
from Hardy space $H^p(\rn)$ to Lebesgue space $L^p(\rn)$.
In this paper we shall answer this problem affirmatively. Not only that,
we also discuss boundedness of $\mu_\Omega^\rho$ from weak Hardy space $WH^p(\rn)$
to weak Lebesgue space $WL^p(\rn)$.

Precisely, the present paper is built up as follows.
In next section, we first recall a notion concerning variable kernel $\Omega$.
Then we discuss the boundedness of $\mu_\Omega^\rho$ from Hardy space to Lebesgue space
(see Theorems \ref{dl.1}-\ref{dl.3} below).
Section \ref{s3} is devoted to establishing the boundedness of $\mu_\Omega^\rho$ from weak Hardy space to weak Lebesgue space
(see Theorems \ref{dl.4}-\ref{dl.6} below).
As corollary, we obtain that $\mu_\Omega^\rho$ is also of the weak type (1,\,1) (see Corollary \ref{tl.7} below).
%
Throughout this paper the letter $C$ will denote a \emph{positive constant} that may vary
from line to line but will remain independent of the main variables.
The \emph{symbol} $P\ls Q$ stands for the inequality $P\le CQ$. If $P\ls Q\ls P$, we then write $P\sim Q$.




\section{$H^p$-$L^p$ boundedness}\label{s2}

Before stating the main results of this scetion, we recall a notion about
the variable kernel $\Omega(x,\,z)$.
For any $0<\alpha\le1$, a function $\Omega(x,z)\in L^\fz(\rn)\times L^1(S^{n-1})$ is said to satisfy the
{\it{$L^{1,\,\az}$-Dini condition}} (when $\alpha=0$, it is called the {\it{$L^1$-Dini condition}}) if
\begin{align*}
\int_0^1\frac{\omega(\delta)}{\delta^{1+\alpha}}\,d\delta<\fz,
\end{align*}
where ${\omega(\delta)}$ is the integral modulus of continuity of $\Omega$ defined by
\begin{align*}
{\omega(\delta)}:=\sup_{\substack{x\in\rn \\ r\ge0}}
\lf(\int_{S^{n-1}}\sup_{\substack{y'\in S^{n-1} \\ |y'-z'|\le\delta}}
\lf|\Omega(x+rz',\,y')-\Omega(x+rz',\,z')\r|\,d\sigma(z')\r).
\end{align*}

The main results of this section are as follows.
\begin{theorem}\label{dl.1}
Let $0<\rho<n$, $q>2(n-1)/n$, $0<\az\le1$, $\bz:=\min\{\az,\,1/2\}$ and $n/(n+\bz)<p<1$.
Suppose that $\Omega\in L^\fz(\rn) \times L^q(S^{n-1})$ satisfies the $L^{1,\,\az}$-Dini condition.
Then $\mu_\Omega^\rho$ is bounded from $H^p(\rn)$ to $L^p(\rn)$.
\end{theorem}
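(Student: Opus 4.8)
The plan is to deduce the result from a uniform estimate on atoms together with the $L^2(\rn)$-boundedness of $\mu_\Omega^\rho$, which is Theorem A with exponent $2$ (legitimate precisely because $q>2(n-1)/n$). Since $n/(n+\bz)<p<1$ forces $\lfloor n(1/p-1)\rfloor=0$, I would first reduce, in the usual way, to showing $\|\mu_\Omega^\rho(a)\|_{L^p(\rn)}\ls1$ uniformly over all $(p,\fz,0)$-atoms $a$, say $\supp a\subset B:=B(x_0,r)$ with $\|a\|_{L^\fz(\rn)}\le|B|^{-1/p}$ and $\int_\rn a=0$; passing from this to the $H^p(\rn)\to L^p(\rn)$ bound is routine, relying on the subadditivity of $\mu_\Omega^\rho$, the linearity in $f$ of $F_t(f)(x):=\int_{|x-y|\le t}\frac{\Omega(x,x-y)}{|x-y|^{n-\rho}}f(y)\,dy$ (so Minkowski's inequality in $L^2(t^{-2\rho-1}\,dt)$ applies to the atomic series), and the $L^2$-boundedness to guarantee convergence. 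Over $2\sqrt n\,B$ the estimate is immediate: H\"older with exponent $2/p$ and Theorem A give $\int_{2\sqrt n B}|\mu_\Omega^\rho(a)|^p\le\lf(\int_\rn|\mu_\Omega^\rho(a)|^2\r)^{p/2}|2\sqrt nB|^{1-p/2}\ls\|a\|_{L^2(\rn)}^p|B|^{1-p/2}\ls1$.

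The global region $x\notin 2\sqrt n\,B$ carries the real content. Put $R:=|x-x_0|$; for $y\in B$ one has $R-r\le|x-y|\le R+r$, so I would split the $t$-integral in $\mu_\Omega^\rho(a)(x)^2$ at $t=R\mp r$: for $t<R-r$ the truncated inner integral vanishes; for $t\ge R+r$ it is $t$-independent and, by $\int_\rn a=0$, equals $\int_B\big[\frac{\Omega(x,x-y)}{|x-y|^{n-\rho}}-\frac{\Omega(x,x-x_0)}{|x-x_0|^{n-\rho}}\big]a(y)\,dy$; and the middle interval $[R-r,R+r]$, of length $2r$, contributes a term bounded crudely by $R^{-(n-\rho)}\int_B|\Omega(x,x-y)||a(y)|\,dy$. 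Splitting the kernel difference in the tail term into a radial part $\Omega(x,x-y)\big[|x-y|^{-(n-\rho)}-|x-x_0|^{-(n-\rho)}\big]$ (handled by the mean value theorem) and an angular part $|x-x_0|^{-(n-\rho)}\big[\Omega(x,x-y)-\Omega(x,x-x_0)\big]$, and carrying out the resulting elementary $t$-integrals, I expect the pointwise bound $\mu_\Omega^\rho(a)(x)\ls\frac{r^{1/2}}{R^{n+1/2}}\int_B|\Omega(x,x-y)||a(y)|\,dy+\frac1{R^n}\int_B|\Omega(x,x-y)-\Omega(x,x-x_0)||a(y)|\,dy$, the radial contribution $\frac r{R^{n+1}}\int_B|\Omega||a|$ being absorbed into the first term since $R\ge 2\sqrt n\,r$.

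Finally I would raise this to the $p$-th power and integrate over $\rn\setminus2\sqrt n\,B=\bigcup_{k\ge1}R_k$, $R_k:=\{2^kr\le|x-x_0|<2^{k+1}r\}$. On each $R_k$, H\"older with exponent $1/p$ reduces matters to the linear integrals $\int_{R_k}(\cdots)\,dx$ against the factor $|R_k|^{1-p}\sim(2^kr)^{n(1-p)}$; then Fubini, and the two kernel integrals are evaluated in polar coordinates about $y$, respectively $x_0$. Condition \eqref{e1.3} with H\"older on $S^{n-1}$ yields $\int_{R_k}|\Omega(x,x-y)|\,dx\ls(2^kr)^n$ uniformly in $y\in B$, while the substitution $x=x_0+\tau w$, the bound $|(\tau w-(y-x_0))'-w'|\ls|y-x_0|/\tau\ls2^{-k}$ for $w\in S^{n-1}$, and the definition of the integral modulus of continuity $\omega$ yield $\int_{R_k}|\Omega(x,x-y)-\Omega(x,x-x_0)|\,dx\ls(2^kr)^n\omega(c2^{-k})$ uniformly in $y\in B$. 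With $\|a\|_{L^1(\rn)}\le|B|^{1-1/p}$ this gives $\int_{R_k}|\mu_\Omega^\rho(a)|^p\ls2^{k[n(1-p)-p/2]}+2^{kn(1-p)}\omega(c2^{-k})^p$; the first series sums because $p>n/(n+1/2)$, and since the $L^{1,\az}$-Dini condition plus monotonicity of $\omega$ force $\omega(\dz)\ls\dz^\az$ for small $\dz$, the second sums because $p>n/(n+\az)$. As $n/(n+\bz)=\max\{n/(n+\az),\,n/(n+1/2)\}$, both hold under the hypothesis and the proof is complete.

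The step I expect to be the main obstacle is the angular estimate $\int_{R_k}|\Omega(x,x-y)-\Omega(x,x-x_0)|\,dx\ls(2^kr)^n\omega(c2^{-k})$: one must exploit the degree-zero homogeneity of $\Omega$ in its second slot, control the two relevant unit vectors to within $c2^{-k}$, and then recognize the spherical integral that appears as being dominated by $\omega(c2^{-k})$ --- for which it matters that the supremum over nearby directions in the definition of $\omega$ sits \emph{inside} the integral over $S^{n-1}$. Everything else is bookkeeping, once one also notes that it is the truncated middle $t$-range that produces the exponent $1/2$, and hence the appearance of $\bz=\min\{\az,1/2\}$ rather than $\az$ alone.
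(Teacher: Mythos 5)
Your proof is correct and follows essentially the same route as the paper's: reduce to $(p,\infty,0)$-atoms, handle the region near the atom by H\"older together with Theorem A (the $L^2$ bound, available exactly because $q>2(n-1)/n$), and on the far region split the $t$-integral into a truncated range --- which produces the $r^{1/2}/R^{n+1/2}$ gain and hence the exponent $1/2$ in $\beta$ --- and a full-ball tail range where the vanishing moment and an annular kernel-difference estimate are used. The one genuine, though minor, difference is that the paper invokes a prepackaged lemma (Lemma~\ref{l3.6}, quoted from the erratum \cite{dll07}) to bound $\int_{R\le|x|<2R}\bigl|\frac{\Omega(\cdot,x-y)}{|x-y|^{n-\rho}}-\frac{\Omega(\cdot,x)}{|x|^{n-\rho}}\bigr|\,dx$ and then feeds the $L^{1,\alpha}$-Dini condition into the resulting $\int_{2|y|/R}^{4|y|/R}\omega(\delta)\,\delta^{-1}\,d\delta$ term, whereas you separate the radial and angular parts of the kernel difference from the outset and derive the angular annular estimate $\int_{R_k}|\Omega(x,x-y)-\Omega(x,x-x_0)|\,dx\ls(2^kr)^n\,\omega(c\,2^{-k})$ inline from the homogeneity of $\Omega$ and the definition of $\omega$ --- which is precisely what the quoted lemma encapsulates. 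Your observation that the $L^{1,\alpha}$-Dini condition together with the monotonicity of $\omega$ forces $\omega(\delta)\ls\delta^\alpha$ for small $\delta$ is correct and substitutes cleanly for the paper's manipulation $\int_{2|y|/R}^{4|y|/R}\omega(\delta)\,\delta^{-1}\,d\delta\ls(|y|/R)^\alpha\int_0^1\omega(\delta)\,\delta^{-1-\alpha}\,d\delta$, yielding the same convergent geometric series under $p>n/(n+\beta)$.
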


\begin{theorem}\label{dl.2}
Let $0<\rho<n$ and $q>2(n-1)/n$. Suppose that $\Omega\in L^\fz(\rn) \times L^q(S^{n-1})$.
If there exists a positive constant ${C}$ such that,
for any $y\in\rn$,
\begin{align}\label{p1}
\int_{|x|\geq2|y|}\lf|\frac{\Omega(x,\,x-y)}{|x-y|^n}-\frac{\Omega(x,\,x)}{|x|^n}\r|\,dx\leq {C},
\end{align}
then $\mu_\Omega^\rho$ is bounded from $H^1(\rn)$ to $L^1(\rn)$.
\end{theorem}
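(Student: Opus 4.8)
The plan is to use the atomic decomposition of $H^1(\rn)$ and reduce the boundedness of $\mu_\Omega^\rho\colon H^1(\rn)\to L^1(\rn)$ to a uniform estimate on atoms. Recall that it suffices to show $\|\mu_\Omega^\rho(a)\|_{L^1(\rn)}\ls 1$ for every $(1,\,\fz,\,0)$-atom $a$, that is, a function supported in a ball $B=B(x_0,\,r)$ with $\|a\|_{L^\fz(\rn)}\le |B|^{-1}$ and $\int_\rn a(x)\,dx=0$; once this is known for atoms, a standard density/approximation argument (together with Theorem A giving $L^2$-boundedness of $\mu_\Omega^\rho$, which follows from $q>2(n-1)/n$) upgrades it to all of $H^1(\rn)$. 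By translation and dilation invariance of the class of atoms and the operator's homogeneity structure, I may assume $x_0=\mathbf 0$ and, after suitable normalization, $r$ is convenient; the key point is that the kernel condition \eqref{p1} is translation-uniform in $y$, which is exactly what makes this reduction legitimate.

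Next I would split the $L^1$-norm of $\mu_\Omega^\rho(a)$ over $\rn$ into the local part $2B=B(\mathbf 0,\,2r)$ and the global part $(2B)^c$. On $2B$, I would use Hölder's inequality with exponent $2$ together with the $L^2$-boundedness of $\mu_\Omega^\rho$ (Theorem A with $p=2$, valid since $q>2(n-1)/n>1\cdot(n-1)/n$ trivially, or more directly $q>2(n-1)/n$) to get
\begin{align*}
\int_{2B}\mu_\Omega^\rho(a)(x)\,dx\le |2B|^{1/2}\|\mu_\Omega^\rho(a)\|_{L^2(\rn)}\ls |2B|^{1/2}\|a\|_{L^2(\rn)}\ls |2B|^{1/2}|B|^{-1/2}\ls 1,
\end{align*}
using $\|a\|_{L^2(\rn)}\le\|a\|_{L^\fz(\rn)}|B|^{1/2}\le|B|^{-1/2}$. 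This part is routine.

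The substantive part is the estimate on $(2B)^c$. Here I would write $\mu_\Omega^\rho(a)(x)$ as the $L^2((0,\fz),\,dt/t^{2\rho+1})$-norm of $F_t(x):=\int_{|x-y|\le t}\frac{\Omega(x,\,x-y)}{|x-y|^{n-\rho}}a(y)\,dy$, use the vanishing moment of $a$ to replace the kernel $\frac{\Omega(x,\,x-y)}{|x-y|^{n-\rho}}\mathbf 1_{|x-y|\le t}$ by its difference with the value at $y=\mathbf 0$ (the center), and then integrate in $t$. The Minkowski integral inequality lets me move the $L^2_t$ norm inside the $y$-integral, so that
\begin{align*}
\mu_\Omega^\rho(a)(x)\le\int_\rn|a(y)|\lf(\int_0^\fz\lf|\frac{\Omega(x,\,x-y)}{|x-y|^{n-\rho}}\mathbf 1_{|x-y|\le t}-\frac{\Omega(x,\,x)}{|x|^{n-\rho}}\mathbf 1_{|x|\le t}\r|^2\frac{dt}{t^{2\rho+1}}\r)^{1/2}dy,
\end{align*}
and the inner $L^2_t$-integral is elementary: for fixed $x,y$ it evaluates (up to constants) to a combination of $|x-y|^{-n}$, $|x|^{-n}$ on the overlapping range $t\ge\max\{|x|,|x-y|\}$ plus a boundary term from $\min\{|x|,|x-y|\}\le t\le\max\{|x|,|x-y|\}$. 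Carrying this out and then integrating in $x$ over $|x|\ge 2r\ge 2|y|$ (since $y\in B$ forces $|y|<r$), the contribution splits into a ``smooth'' piece controlled by $\bigl||x-y|^{-n}-|x|^{-n}\bigr|\ls |y|/|x|^{n+1}$ which integrates to an absolute constant after using $\int_\rn|a|\le1$, and a ``kernel-regularity'' piece controlled exactly by the quantity $\int_{|x|\ge 2|y|}\bigl|\frac{\Omega(x,\,x-y)}{|x-y|^n}-\frac{\Omega(x,\,x)}{|x|^n}\bigr|\,dx$, which is bounded by $C$ by hypothesis \eqref{p1}. Summing the local and global estimates gives $\|\mu_\Omega^\rho(a)\|_{L^1(\rn)}\ls 1$.

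The main obstacle I anticipate is the careful bookkeeping of the inner $t$-integral on $(2B)^c$: one must correctly handle the interplay between the two truncation thresholds $|x|\le t$ and $|x-y|\le t$, producing the boundary term on the annular range $\min\{|x|,|x-y|\}\le t\le\max\{|x|,|x-y|\}$, and then show that this boundary term, after the $x$-integration, is again dominated by the left-hand side of \eqref{p1} (or by the harmless $|y|/|x|^{n+1}$ piece), so that \eqref{p1} really is the only hypothesis needed beyond $L^2$-boundedness. A secondary technical point is justifying the use of $L^2$-boundedness on $2B$: since Theorem A requires $q>p'(n-1)/n$ with $p=2$, i.e. $q>2(n-1)/n$, this is precisely the standing assumption, so no extra work is needed there. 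Once the $t$-integral is pinned down, the rest is the standard two-region argument for Hardy-space boundedness.
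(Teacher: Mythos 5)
Your proposal is correct and essentially matches the paper's strategy: reduce to $(1,\fz,0)$-atoms, handle the local piece by H\"older plus Theorem~A's $L^2$-boundedness, and handle the far piece by subtracting the kernel value at the center and splitting the resulting difference into a ``smooth'' piece (mean value theorem) and a ``kernel-regularity'' piece controlled directly by the H\"ormander condition \eqref{p1}. The only genuine difference is the organization of the $t$-integral: the paper cuts the $t$-range at $t=|x|+2r$ (producing its $\mathrm{J_1}$ and $\mathrm{J_2}$), while you keep both truncations $\mathbf 1_{|x-y|\le t}$ and $\mathbf 1_{|x|\le t}$ explicit and evaluate the $L^2_t$-norm of the difference; the resulting ``boundary range'' $\min\{|x|,|x-y|\}\le t\le\max\{|x|,|x-y|\}$ and ``overlapping range'' $t\ge\max\{|x|,|x-y|\}$ are the same two regimes in disguise, so this is a cosmetic, not substantive, variant. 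Two small imprecisions worth flagging: (1) your written bound for the smooth piece, $\bigl||x-y|^{-n}-|x|^{-n}\bigr|\ls|y|/|x|^{n+1}$, drops the factor $|\Omega(x,x-y)|$; after the mean value theorem you still have to integrate $|\Omega(x,x-y)|\,|y|/|x-y|^{n+1}$ in $x$, and this requires the $L^1(S^{n-1})$-integrability of $\Omega$ (a consequence of $\Omega\in L^\fz\times L^q$), exactly as the paper does in its $\mathrm{J_{21}}$ estimate; and (2) the boundary term is \emph{not} dominated by \eqref{p1} --- it involves $|\Omega(x,x-y)|$ alone, not a difference --- but is handled by the same mean-value-theorem plus $L^1(S^{n-1})$ argument as the smooth piece, which is what the paper's $\mathrm{J_1}$ estimate accomplishes. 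With those two clarifications your outline reproduces the paper's proof.
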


\begin{theorem}\label{dl.3}
Let $0<\rho<n$ and $q>2(n-1)/n$. Suppose that $\Omega\in L^\fz(\rn) \times L^q(S^{n-1})$ satisfies the $L^1$-Dini condition.
Then $\mu_\Omega^\rho$ is bounded from $H^1(\rn)$ to $L^1(\rn)$.
\end{theorem}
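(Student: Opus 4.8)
The plan is to deduce Theorem \ref{dl.3} from Theorem \ref{dl.2} by showing that the $L^1$-Dini condition on $\Omega$ implies the integral estimate \eqref{p1}. Once \eqref{p1} is verified, the $H^1(\rn)$-$L^1(\rn)$ boundedness is immediate. So the entire task reduces to a kernel computation, which is a standard Calder\'on--Zygmund--type maneuver adapted to variable kernels.

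First I would fix $y\in\rn$ and split the difference inside the integral in \eqref{p1} into two pieces in the usual way:
\begin{align*}
\lf|\frac{\Omega(x,\,x-y)}{|x-y|^n}-\frac{\Omega(x,\,x)}{|x|^n}\r|
\le \frac{|\Omega(x,\,x-y)-\Omega(x,\,x)|}{|x-y|^n}
+|\Omega(x,\,x)|\,\lf|\frac{1}{|x-y|^n}-\frac{1}{|x|^n}\r|.
\end{align*}
On the region $|x|\ge 2|y|$ one has $|x-y|\sim|x|$, so the second term is dominated by $C|\Omega(x,\,x)|\,|y|/|x|^{n+1}$; using \eqref{e1.3} (with $q=1$) to control the spherical average of $|\Omega|$ and passing to polar coordinates, the contribution of this term is bounded by a constant times $|y|\int_{2|y|}^\fz r^{-2}\,dr \sim 1$. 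For the first term, I would write $x=r z'$ in polar coordinates, and note that the angular direction $(x-y)'$ differs from $z'=x'$ by an amount comparable to $|y|/|x|$; thus, after using \eqref{e1.1} (homogeneity of degree zero in the second variable) to replace $\Omega(x,\,x-y)$ by $\Omega(x,\,(x-y)')$ and $\Omega(x,\,x)$ by $\Omega(x,\,z')$, the integrand is controlled by the oscillation
$\sup_{|y'-z'|\le C|y|/r}|\Omega(rz',\,y')-\Omega(rz',\,z')|$. Integrating over $S^{n-1}$ and recalling the definition of $\omega(\dz)$ then bounds the first term's contribution by
\begin{align*}
C\int_{2|y|}^\fz \omega\!\lf(\frac{C|y|}{r}\r)\frac{dr}{r}
= C\int_0^{1/2}\frac{\omega(C\dz)}{\dz}\,d\dz \le C'\int_0^{1}\frac{\omega(\dz)}{\dz}\,d\dz,
\end{align*}
after the substitution $\dz=|y|/r$ and absorbing the harmless constant $C$ inside $\omega$'s argument by a further rescaling; this last integral is finite precisely by the $L^1$-Dini condition. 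Adding the two contributions gives \eqref{p1} with a constant independent of $y$.

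The main obstacle I anticipate is purely technical: controlling the angular displacement $|(x-y)'-x'|$ by $C|y|/|x|$ uniformly on $|x|\ge 2|y|$, and verifying that after using the zero-homogeneity to reduce $\Omega$'s second argument to the sphere, the modulus of continuity $\omega$ genuinely dominates the resulting oscillation (in particular checking that the supremum over $y'\in S^{n-1}$ with $|y'-z'|\le\dz$ in the definition of $\omega$ really does majorize $|\Omega(rz',(x-y)')-\Omega(rz',z')|$ with $\dz\sim|y|/r$). One must also be mindful that the definition of $\omega$ takes the form $\sup_{x,r}\int_{S^{n-1}}\sup_{y'}(\cdots)$, so the $x$ appearing there is the base point $x+rz'$; after the substitution $x=rz'$ (so that the base point is $rz'$ itself) everything lines up, but writing this carefully is where the care is needed. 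The handling of the radial difference $||x-y|^{-n}-|x|^{-n}|$ and the final change of variables are routine.
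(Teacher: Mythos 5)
Your strategy---deriving the H\"ormander condition \eqref{p1} from the $L^1$-Dini condition and then invoking Theorem~\ref{dl.2}---is a legitimate and arguably more modular route than the one the paper takes. The paper's ``proof'' is in fact only a pointer to \cite[Theorem~1.3]{dls04}; judging from the proofs of Theorems~\ref{dl.1} and~\ref{dl.2} and from Lemma~\ref{l3.6}, the intended argument is a direct atomic estimate over dyadic annuli $E_j=\{2^jr\le|x|<2^{j+1}r\}$, with the $L^1$-Dini hypothesis entering through a telescoped sum of the integrals $\int_{2|y|/R}^{4|y|/R}\omega(\delta)\,\delta^{-1}\,d\delta$ rather than through the Calder\'on--Zygmund reduction to the H\"ormander class that you propose. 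Your route makes explicit that $L^1$-Dini kernels form a subclass of the H\"ormander class, which is conceptually cleaner and re-uses the already-proved Theorem~\ref{dl.2}.

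There is, however, a genuine gap in your verification of \eqref{p1}. On $|x|\ge 2|y|$ the angular deviation is $|(x-y)'-x'|\le 2|y|/|x-y|\le 4|y|/|x|$, so after the substitution $\delta=|y|/r$ the oscillation piece is bounded by $\int_0^{2}\omega(u)\,u^{-1}\,du$, \emph{not} $\int_0^1\omega(u)\,u^{-1}\,du$. The extra factor of $4$ cannot be ``absorbed by a further rescaling'': the $L^1$-Dini hypothesis controls $\omega$ only on $(0,1)$, $\omega$ is nondecreasing, and under the stated hypotheses $\omega(2)$ (indeed already $\omega(1)$) may be infinite, since \eqref{e1.3} gives only an $L^q$ bound on $\Omega(x,\cdot)$ over the sphere, never an $L^\infty$ one that would make the oscillation over the whole sphere finite. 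The fix is standard but must be written: split the radial integral at $r=8|y|$. For $r\ge 8|y|$ one has $|x-y|\ge 7r/8$, hence deviation at most $16|y|/(7r)\le 2/7<1$, and your Dini argument applies verbatim, giving $\ls\int_0^1\omega(u)\,u^{-1}\,du$. For $2|y|\le r<8|y|$, bound the two terms of the difference separately: $\int_{2|y|\le|x|<8|y|}|\Omega(x,x)|\,|x|^{-n}\,dx\ls 1$ directly by \eqref{e1.3} with exponent $1$ (which follows from $q>2(n-1)/n\ge 1$ and the finiteness of $\sigma(S^{n-1})$), and after the change of variables $x\mapsto x+y$ the term $\int_{2|y|\le|x|<8|y|}|\Omega(x,x-y)|\,|x-y|^{-n}\,dx$ is bounded the same way. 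Once this near-diagonal piece is handled your reduction to Theorem~\ref{dl.2} closes; note also that because $\omega$ already contains a supremum over the translation $x\in\rn$, your bound holds uniformly for $\Omega(\cdot+h,\cdot)$, which is what is actually required when the atom is not centered at the origin.
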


To show the above theorems, we need the following definition and lemma.
\begin{definition}\label{d2.11}{\rm{(\cite{l95})}}
Let $0<p\le1$ and the nonnegative integer $s\ge \lfloor n(1/p-1)\rfloor$.
A function $a(x)$ is called a {\it $(p,\,\fz,\,s)$-atom} associated with some ball $B\subset\rn$
if it satisfies the following three conditions:
\begin{enumerate}
\item[\rm{(i)}] $a$ is supported in $B$;
\item[\rm{(ii)}] $\|a\|_{L^\fz(\rn)}\leq |B|^{-1/p}$;
\item[\rm{(iii)}] $\int_\rn a(x)x^\gz dx=0$ for any multi-index $\gz$ with $|\gz|\leq s$.
\end{enumerate}
\end{definition}

\begin{lemma}\label{l3.6}{\rm{(\cite{dll07})}}
Let $0<\rho<n$. Suppose $\Omega(x,\,z)\in L^\fz(\rn) \times L^1(S^{n-1})$.
If there exists a constant $0<\bz\le1/2$ such that $|y|<\bz R$, then, for any $h\in\rn$,
\begin{align*}
\int_{R\le |x|<2R}\lf|\frac{\Omega(x+h,\,x-y)}{|x-y|^{n-\rho}}-\frac{\Omega(x+h,\,x)}{|x|^{n-\rho}}\r|\,dx
\leq CR^{\rho}\lf({\frac{|y|}{R}}+\int_{{2|y|}/{R}}^{{4|y|}/{R}}\frac{\omega(\delta)}{\delta}d\delta\r),
\end{align*}
where the positive constant $C$ is independent of $R$ and $y$.
\end{lemma}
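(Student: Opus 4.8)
The plan is to isolate the two sources of oscillation in the integrand: that of $\Omega$ in its angular variable, and that of the radial factor $|\cdot|^{\rho-n}$. For $R\le|x|<2R$, I would write
\begin{align*}
\frac{\Omega(x+h,x-y)}{|x-y|^{n-\rho}}-\frac{\Omega(x+h,x)}{|x|^{n-\rho}}
&=\frac{\Omega(x+h,x-y)-\Omega(x+h,x)}{|x-y|^{n-\rho}}\\
&\quad{}+\Omega(x+h,x)\lf(\frac{1}{|x-y|^{n-\rho}}-\frac{1}{|x|^{n-\rho}}\r),
\end{align*}
call the two summands $J_1(x)$ and $J_2(x)$, and bound $\int_{R\le|x|<2R}|J_1(x)|\,dx$ and $\int_{R\le|x|<2R}|J_2(x)|\,dx$ separately. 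Note first that $|y|<\bz R\le R/2$ forces $|x-y|\in[R/2,5R/2]$, so $|x-y|\sim|x|\sim R$ throughout; and that $h$ is harmless, since it only shifts the first argument of $\Omega$, over which both condition \eqref{e1.3} and the modulus $\omega$ take a supremum, so the final constant will be independent of $h$ as well.

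For the radial term, the mean value theorem applied to $t\mapsto t^{\rho-n}$ on the interval between $|x-y|$ and $|x|$ (both $\sim R$) gives $\bigl|\,|x-y|^{\rho-n}-|x|^{\rho-n}\,\bigr|\ls|y|R^{\rho-n-1}$. Then I would pass to polar coordinates $x=sz'$, $s\in[R,2R)$, use the homogeneity \eqref{e1.1} to replace $\Omega(sz'+h,sz')$ by $\Omega(sz'+h,z')$, and invoke \eqref{e1.3} with $q=1$ and base point $h$ to bound $\int_{S^{n-1}}|\Omega(sz'+h,z')|\,d\sigma(z')$ by a constant uniformly in $s$ and $h$; since $\int_R^{2R}s^{n-1}\,ds\sim R^n$, this yields $\int_{R\le|x|<2R}|J_2(x)|\,dx\ls|y|R^{\rho-1}=R^\rho\cdot|y|/R$, the first term of the claimed bound.

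For the main term $J_1$, I would pull out $|x-y|^{\rho-n}\sim R^{\rho-n}$, use \eqref{e1.1} to project the two arguments of $\Omega$ onto $S^{n-1}$, and pass to polar coordinates. The geometric crux is the elementary bound $|(sz'-y)'-z'|\le 2|y|/(s-|y|)$, valid for $s\ge R$, $|y|<R/2$, since $\bigl|\,|sz'|-|sz'-y|\,\bigr|\le|y|$ and $|sz'-y|\ge s-|y|$; combined with the definition of $\omega$, read with base point $h+sz'$ and radius $s$, it gives $\int_{S^{n-1}}|\Omega(sz'+h,(sz'-y)')-\Omega(sz'+h,z')|\,d\sigma(z')\le\omega\bigl(2|y|/(s-|y|)\bigr)$. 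As $s^{n-1}\sim R^{n-1}$, it remains to estimate $\int_R^{2R}\omega\bigl(2|y|/(s-|y|)\bigr)\,ds$. The substitution $\delta=2|y|/(s-|y|)$ turns this into $2|y|\int\omega(\delta)\,\delta^{-2}\,d\delta$ over a range of $\delta$ that---precisely because $\bz\le1/2$, i.e.\ $|y|<R/2$---lies inside $[|y|/R,\,4|y|/R]$; on that range $\delta^{-1}\le R/|y|$, so $\delta^{-2}$ may be replaced by $\delta^{-1}$, giving $\ls R^n\int_{|y|/R}^{4|y|/R}\omega(\delta)\,\delta^{-1}\,d\delta$. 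Finally $\omega$ is non-decreasing, so $\int_{|y|/R}^{2|y|/R}\omega(\delta)\,\delta^{-1}\,d\delta\le\omega(2|y|/R)\ln 2\le\int_{2|y|/R}^{4|y|/R}\omega(\delta)\,\delta^{-1}\,d\delta$; hence $\int_{R\le|x|<2R}|J_1(x)|\,dx\ls R^\rho\int_{2|y|/R}^{4|y|/R}\omega(\delta)\,\delta^{-1}\,d\delta$. Adding the $J_1$ and $J_2$ bounds gives the lemma.

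The one step I expect to require genuine care is the endgame of the $J_1$ estimate: one must check that the $\delta$-range produced by the substitution really sits inside $[|y|/R,4|y|/R]$ (this is exactly where the hypothesis $\bz\le1/2$ is used in an essential way) and then use monotonicity of $\omega$ to restore the advertised lower limit $2|y|/R$. A secondary, milder point is making sure the definition of $\omega$---whose base point has the special form ``fixed point plus a multiple of the angular direction $z'$''---is applied with the correct reading (here the fixed point is $h$ and the multiple is $sz'$); once that is noticed it is immediate. The mean value estimate, the polar-coordinate changes, and the use of \eqref{e1.3} are all routine.
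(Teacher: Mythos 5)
The paper does not prove this lemma itself; it is cited from \cite{dll07}, so there is no in-paper proof to compare against. Your argument is correct and is the standard route one would take (and presumably the one in the cited erratum): split the difference into a piece carrying the oscillation of $\Omega$ in its angular argument and a piece carrying the oscillation of the radial kernel $|\cdot|^{\rho-n}$; for the radial piece use the mean value theorem together with the size condition \eqref{e1.3} (at $q=1$), and for the angular piece use the geometric bound $|(sz'-y)'-z'|\le 2|y|/(s-|y|)$, pass to the variable $\delta=2|y|/(s-|y|)$, and invoke the monotonicity of $\omega$ to restore the advertised limits. The details all check: in particular, $\beta\le 1/2$ is used exactly where claimed, both to guarantee $|x-y|\sim|x|\sim R$ and to place the substituted $\delta$-range inside $[|y|/R,4|y|/R]$.
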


\begin{proof}[Proof of Theorem \ref{dl.1}]
By the atomic decomposition theory of Hardy space (see \cite[Chapter 2]{l95}),
our problem reduces to prove that there exists a positive constant $C$ such that,
for any $(p,\,\fz,\,s)$-atom $a(x)$, $\|\mu_\Omega^\rho(a)\|_{L^p(\rn)}\le C$.
To this end, without loss of generality,
we may assume $a$ is supported in a ball $B:=B(\mathbf{0},\,r)$ for some $r\in(0,\,\fz)$.
Below, we estimate $\mu_\Omega^\rho(a)$ separately around and away from the support of atom $a(x)$ as follows.
To be precise, let us write
\begin{align*}
\int_{\rn}\lf|\mu_\Omega^\rho(a)(x)\r|^p\,dx=
\int_{8B}\lf|\mu_\Omega^\rho(a)(x)\r|^p\,dx+\int_{(8B)^\complement}\lf|\mu_\Omega^\rho(a)(x)\r|^p\,dx=:\mathrm{I+J}.
\end{align*}

For $\mathrm{I}$, by H\"{o}lder's inequality and Theorem A, we have
\begin{align*}
\mathrm{I}=\int_{8B}\lf|\mu_\Omega^\rho(a)(x)\r|^p\,dx
\le \lf(\int_{8B}\lf|\mu_\Omega^\rho(a)(x)\r|^2\,dx\r)^{p/2}|8B|^{1-p/2}
\ls \|a\|_{L^\fz(\rn)}^p|B|\ls 1.
\end{align*}

For $\mathrm{J}$, we rewrite
\begin{align*}
\mathrm{J}
&=\int_{(8B)^\complement}\lf|\mu_\Omega^\rho(a)(x)\r|^p\,dx \\
&=\int_{(8B)^\complement}\lf(\int_0^\fz\lf|\int_{|x-y|\le t}
 \frac{\Omega(x,\,x-y)}{|x-y|^{n-\rho}}a(y)\,dy\r|^2\frac{dt}{t^{2\rho+1}}\r)^{p/2}\,dx \\
&\le \int_{(8B)^\complement}\lf(\int_0^{|x|+2r}\lf|\int_{|x-y|\le t}
 \frac{\Omega(x,\,x-y)}{|x-y|^{n-\rho}}a(y)\,dy\r|^2\frac{dt}{t^{2\rho+1}}\r)^{p/2}\,dx \\
&\hs+\int_{(8B)^\complement}\lf(\int_{|x|+2r}^\fz\cdot\cdot\cdot\r)^{p/2}\,dx=:\mathrm{J_1+J_2}.
\end{align*}

We first estimate $\mathrm{J_1}$.
Noticing that $y\in B$ and $x\in {(8B)^\complement}$, we know that $|x-y|\sim|x|\sim|x|+2r$.
From this and the mean value theorem, it follows that, for any $y\in B$ and $x\in {(8B)^\complement}$,
\begin{align*}
\lf|\frac{1}{|x-y|^{2\rho}}-\frac{1}{(|x|+2r)^{2\rho}}\r|\ls\frac{r}{|x-y|^{2\rho+1}}.
\end{align*}
Using Minkowski's inequality for integrals and the above inequality, we know that
\begin{align*}
\mathrm{J_1}
&=\int_{(8B)^\complement}\lf(\int_0^{|x|+2r}\lf|\int_{|x-y|\le t}
 \frac{\Omega(x,\,x-y)}{|x-y|^{n-\rho}}a(y)\,dy\r|^2\frac{dt}{t^{2\rho+1}}\r)^{p/2}\,dx \\
&\leq \int_{(8B)^\complement}\lf[\int_{B}\lf|\frac{\Omega(x,\,x-y)}{|x-y|^{n-\rho}}a(y)\r|
 \lf(\int_{|x-y|}^{|x|+2r}\frac{dt}{t^{2\rho+1}}\r)^{1/2}\,dy\r]^p\,dx \\
&\ls |B|^{-1} \int_{(8B)^\complement}\lf[\int_{B}\frac{\lf|\Omega(x,\,x-y)\r|}{|x-y|^{n-\rho}}
 \lf|\frac{1}{|x-y|^{2\rho}}-\frac{1}{(|x|+2r)^{2\rho}}\r|^{1/2}\,dy\r]^p\,dx \\
&\ls r^{-n+p/2}\int_{(8B)^\complement}\lf(\int_{B}\frac{\lf|\Omega(x,\,x-y)\r|}{|x-y|^{n+1/2}}\,dy\r)^p\,dx.
\end{align*}
Thanks to $p>n/(n+1/2)$, we may choose $\varepsilon$ satisfying $0<\varepsilon<n+1/2-n/p$.
Apply H\"{o}lder's inequality to obtain
\begin{align*}
\mathrm{J_1}
&\ls r^{-n+p/2}\int_{(8B)^\complement}
 \lf(\int_{B}\frac{\lf|\Omega(x,\,x-y)\r|}{|x-y|^{n+\varepsilon}}\,dy\r)^p|x|^{(\varepsilon-1/2)p}\,dx \\
&\ls r^{-n+p/2}\lf(\int_{(8B)^\complement}
 \int_{B}\frac{\lf|\Omega(x,\,x-y)\r|}{|x-y|^{n+\varepsilon}}\,dy\,dx\r)^p
 \lf(\int_{(8B)^\complement}|x|^{(\varepsilon-1/2)\frac{p}{1-p}}\,dx\r)^{1-p} \\
&\sim r^{-n+p/2}\lf(\int_{|y|<r}
 \int_{|x|\ge8r}\frac{\lf|\Omega(x,\,x-y)\r|}{|x-y|^{n+\varepsilon}}\,dx\,dy\r)^p
 \lf(\int_{|x|\ge8r}|x|^{(\varepsilon-1/2)\frac{p}{1-p}}\,dx\r)^{1-p} \\
&\ls r^{-n+p/2}\lf(\int_{|y|<r}
 \int_{|x-y|>r}\frac{\lf|\Omega(x,\,x-y)\r|}{|x-y|^{n+\varepsilon}}\,dx\,dy\r)^p
 \lf(\int_{|x|>r}|x|^{(\varepsilon-1/2)\frac{p}{1-p}}\,dx\r)^{1-p} \\
&\sim r^{-n+p/2}\lf(\int_{|y|<r}
 \int_{|z|>r}\frac{\lf|\Omega(y+z,\,z)\r|}{|z|^{n+\varepsilon}}\,dz\,dy\r)^p
 \lf(\int_{|x|>r}|x|^{(\varepsilon-1/2)\frac{p}{1-p}}\,dx\r)^{1-p}   \\
&\sim r^{-n+p/2}\lf(\int_{|y|<r}
 \int_r^\fz u^{-\varepsilon-1}\,du\,dy\r)^p
 \lf(\int_r^\fz u^{(\varepsilon-1/2)\frac{p}{1-p}}u^{n-1}\,du\r)^{1-p}\sim1.
\end{align*}

Now we are interested in $\mathrm{J_2}$.
For any integer $j\ge 3$, denote simply $\{x\in\rn: \ 2^jr\le|x|<2^{j+1}r \}$ by $E_j$.
It is apparent from $t>|x|+2r$ that $B\subset \{y\in\rn: \ |x-y|\le t\}$.
From this, vanishing moments of atom $a(x)$, and Minkowski's inequality for integrals,
we deduced that
\begin{align*}
\mathrm{J_2}
&=\int_{(8B)^\complement}\lf[\int_{|x|+2r}^\fz\lf|\int_{|x-y|\le t}
 \lf(\frac{\Omega(x,\,x-y)}{|x-y|^{n-\rho}}-\frac{\Omega(x,\,x)}{|x|^{n-\rho}}\r)a(y)\,dy\r|^2
 \frac{dt}{t^{2\rho+1}}\r]^{p/2}\,dx \\
&\le \int_{(8B)^\complement}\lf[\int_{B}
 \lf|\frac{\Omega(x,\,x-y)}{|x-y|^{n-\rho}}-\frac{\Omega(x,\,x)}{|x|^{n-\rho}}\r||a(y)|
 \lf(\int_{|x|}^\fz\frac{dt}{t^{2\rho+1}}\r)^{1/2}\,dy\r]^p\,dx \\
&\sim \int_{(8B)^\complement}\lf(\int_{B}
 \lf|\frac{\Omega(x,\,x-y)}{|x-y|^{n-\rho}}-\frac{\Omega(x,\,x)}{|x|^{n-\rho}}\r|
 \frac{|a(y)|}{|x|^\rho}\,dy\r)^p\,dx \\
&\ls r^{-n}\sum_{j=3}^\fz\int_{E_j}\lf(\int_{B}
 \lf|\frac{\Omega(x,\,x-y)}{|x-y|^{n-\rho}}-\frac{\Omega(x,\,x)}{|x|^{n-\rho}}\r|
 \frac{1}{|x|^\rho}\,dy\r)^p\,dx \\
&\ls r^{-n}\sum_{j=3}^\fz(2^jr)^{n(1-p)}\lf(\int_{E_j}\int_{B}
 \lf|\frac{\Omega(x,\,x-y)}{|x-y|^{n-\rho}}-\frac{\Omega(x,\,x)}{|x|^{n-\rho}}\r|
 \frac{1}{|x|^\rho}\,dy\,dx\r)^p \\
&\ls r^{-np}\sum_{j=3}^\fz 2^{jn(1-p)} (2^jr)^{-\rho p}\lf(\int_{B}\int_{E_j}
 \lf|\frac{\Omega(x,\,x-y)}{|x-y|^{n-\rho}}-\frac{\Omega(x,\,x)}{|x|^{n-\rho}}\r|\,dx\,dy\r)^p.
\end{align*}
Using Lemma \ref{l3.6} and the assumption that $\Omega$ satisfies the $L^{1,\,\az}$-Dini condition,
the above inner integral is bounded by a positive constant times
\begin{align*}
(2^jr)^{\rho}\lf(\frac{|y|}{2^jr}+\int_{\frac{2|y|}{2^jr}}^{\frac{4|y|}{2^jr}}\frac{\omega(\delta)}{\delta}\,d\delta\r)
&\ls (2^jr)^{\rho}\lf[\frac{|y|}{2^jr}+
\lf(\frac{|y|}{2^jr}\r)^\az\int_{\frac{2|y|}{2^jr}}^{\frac{4|y|}{2^jr}}\frac{\omega(\delta)}{\delta^{1+\az}}\,d\delta\r] \\
&\ls  (2^jr)^{\rho}\lf[\frac{|y|}{2^jr}
 +\lf(\frac{|y|}{2^jr}\r)^\az\int_{0}^{1}\frac{\omega(\delta)}{\delta^{1+\az}}\,d\delta\r]
\ls (2^jr)^{\rho}2^{-j\az}.
\end{align*}
If we plug the above inequality into $\mathrm{J_2}$, we obtain that
\begin{align*}
\mathrm{J_2}\ls r^{-np}\sum_{j=3}^\fz 2^{jn(1-p)} (2^jr)^{-\rho p}\lf(\int_{|y|<r}(2^jr)^{\rho}2^{-j\az}\,dy\r)^p
\sim\sum_{j=3}^\fz 2^{j(n-np-\az p)}\sim1,
\end{align*}
where the last ``$\sim$" is due to $p>n/(n+\az)$.

Finally, collecting the estimates of ${\rm{I}}$, ${\rm{J_1}}$ and ${\rm{J_2}}$, we obtain the desired inequality.
This finishes the proof of Theorem \ref{dl.1}.
\end{proof}

\begin{proof}[Proof of Theorem \ref{dl.2}]
Since the proof of Theorem \ref{dl.2} is similar to that of Theorem \ref{dl.1},
we use the same notation as in the proof of Theorem \ref{dl.1}.
Rather that give a completed proof, we just give out the necessary modifications with respect to the estimate of ${\rm{J_2}}$. Rewrite
\begin{align*}
\mathrm{J_2}
&\le C\int_{(8B)^\complement}\int_{B}
 \lf|\frac{\Omega(x,\,x-y)}{|x-y|^{n-\rho}}-\frac{\Omega(x,\,x)}{|x|^{n-\rho}}\r|
 \frac{|a(y)|}{|x|^\rho}\,dy\,dx \\
&\le C\int_{(8B)^\complement}\int_{B}
 \lf|\frac{\Omega(x,\,x-y)}{|x|^\rho|x-y|^{n-\rho}}-\frac{\Omega(x,\,x-y)}{|x-y|^{n}}\r||a(y)|\,dy\,dx \\
&\hs+C\int_{(8B)^\complement}\int_{B}
 \lf|\frac{\Omega(x,\,x-y)}{|x-y|^{n}}-\frac{\Omega(x,\,x)}{|x|^{n}}\r||a(y)|\,dy\,dx=:C(\mathrm{J_{21}+J_{22}}).
\end{align*}

Below, we will give the estimates of $\mathrm{J_{21}}$ and $\mathrm{J_{22}}$, respectively.

For $\mathrm{J_{21}}$, noticing that $y\in B$ and $x\in {(8B)^\complement}$, we have
$|x|\sim|x-y|$ and $|y|\le|x-y|$.
From this and the mean value theorem, it follows that, for any $y\in B$ and $x\in {(8B)^\complement}$,
\begin{align*}
\lf|\frac{1}{|x|^{\rho}}-\frac{1}{|x-y|^{\rho}}\r|\ls\frac{r^{1/2}}{|x-y|^{\rho+{1/2}}}.
\end{align*}
Substituting the above inequality into ${\mathrm{J_{21}}}$, we have
\begin{align*}
\mathrm{J_{21}}
&\ls\int_{(8B)^\complement}\int_{B}
\frac{|\Omega(x,\,x-y)|}{|x-y|^{n-\rho}}\frac{r^{1/2}}{|x-y|^{\rho+{1/2}}}|a(y)|\,dy\,dx \\
&\ls r^{-n+{1/2}}\int_{|x|\ge8r}\int_{|y|<r}\frac{|\Omega(x,\,x-y)|}{|x-y|^{n+{1/2}}}\,dy\,dx \\
&\ls r^{-n+{1/2}}\int_{|y|<r}\int_{|x-y|>r}\frac{|\Omega(x,\,x-y)|}{|x-y|^{n+{1/2}}}\,dx\,dy \\
&\sim r^{-n+{1/2}}\int_{|y|<r}\lf(\int_{S^{n-1}}\int_r^\fz\frac{|\Omega(y+uz',\,z')|}{u^{n+{1/2}}}u^{n-1}\,du\,d\sigma(z')\r)dy \\
&\ls r^{-n+{1/2}}\int_{|y|<r}\lf(\int_r^\fz \frac{1}{u^{3/2}}\,du\r)dy\sim1.
\end{align*}

We are now turning to the estimate of $\mathrm{J_{22}}$. The H\"{o}rmander condition \eqref{p1} yields
\begin{align*}
\mathrm{J_{22}}
&=\int_{(8B)^\complement}\int_{B} \lf|\frac{\Omega(x,\,x-y)}{|x-y|^{n}}-\frac{\Omega(x,\,x)}{|x|^{n}}\r||a(y)|\,dy\,dx \\
&\le \int_{|y|<r} \lf(\int_{|x|\ge2|y|} \lf|\frac{\Omega(x,\,x-y)}{|x-y|^{n}}-\frac{\Omega(x,\,x)}{|x|^{n}}\r|dx\r)|a(y)|\,dy \\
&\ls \int_{|y|<r}|a(y)|\,dy\ls1.
\end{align*}

The proof is completed.
\end{proof}

\begin{proof}[Proof of Theorem \ref{dl.3}]
Proceeding as in the proof of \cite[Theorem 1.3]{dls04},
it is quite believable that this theorem may also be
true for parametric case, but to limit the length of this article,
we leave the details to the interested reader.
\end{proof}

\section{$WH^p$-$WL^p$ boundedness \label{s3}}
The main results of this section are as follows.
\begin{theorem}\label{dl.4}
Let $0<\rho<n$, $q>2(n-1)/n$, $0<\az\le1$, $\bz:=\min\{\az,\,1/2\}$ and $n/(n+\bz)<p<1$.
Suppose that $\Omega\in L^\fz \times L^q(S^{n-1})$ satisfies the $L^{1,\,\az}$-Dini condition.
Then $\mu_\Omega^\rho$ is bounded from $WH^p(\rn)$ to $WL^p(\rn)$.
\end{theorem}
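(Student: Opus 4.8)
The plan is to combine the atomic decomposition of $WH^p(\rn)$ with two facts established earlier: the $L^2(\rn)$-boundedness of $\mu_\Omega^\rho$ (the case $p=2$ of Theorem A, which applies because $q>2(n-1)/n$), and the annulus-by-annulus decay of $\mu_\Omega^\rho$ on atoms that is already contained in the proof of Theorem \ref{dl.1}. First I would invoke the decomposition (see \cite{l95}): any $f\in WH^p(\rn)$ can be written as $f=\sum_{k\in\zz}\sum_i\lz_i^k a_i^k$ in $\cs'(\rn)$, where each $a_i^k$ is a $(p,\,\fz,\,s)$-atom supported in a ball $B_i^k$ of radius $r_i^k$, $|\lz_i^k|\sim 2^k|B_i^k|^{1/p}$, for each fixed $k$ the balls $\{B_i^k\}_i$ have bounded overlap, and $\sum_i|B_i^k|\ls 2^{-kp}\|f\|_{WH^p(\rn)}^p$. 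Given $\lz>0$, I would fix $k_0\in\zz$ with $2^{k_0}\le\lz<2^{k_0+1}$ and split $f=F_1+F_2$ with $F_1:=\sum_{k\le k_0}\sum_i\lz_i^k a_i^k$ and $F_2:=\sum_{k>k_0}\sum_i\lz_i^k a_i^k$; by sublinearity of $\mu_\Omega^\rho$ it then suffices to bound $\lz^p|\{x\in\rn:\ \mu_\Omega^\rho(F_\ell)(x)>\lz/2\}|$, $\ell=1,2$, by a constant multiple of $\|f\|_{WH^p(\rn)}^p$ independent of $\lz$, and to take the supremum over $\lz$.

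For $F_1$ I would use Chebyshev's inequality and the $L^2$-boundedness of $\mu_\Omega^\rho$, so that $\lz^p|\{\mu_\Omega^\rho(F_1)>\lz/2\}|\ls\lz^{p-2}\|F_1\|_{L^2(\rn)}^2$; the bounded overlap of the $B_i^k$ gives, for each fixed $k$, $\|\sum_i\lz_i^k a_i^k\|_{L^2(\rn)}^2\ls\sum_i|\lz_i^k|^2\|a_i^k\|_{L^2(\rn)}^2\ls 2^{2k}\sum_i|B_i^k|\ls 2^{k(2-p)}\|f\|_{WH^p(\rn)}^p$, and since $2-p>0$ the triangle inequality and summation of a geometric series yield $\|F_1\|_{L^2(\rn)}^2\ls 2^{k_0(2-p)}\|f\|_{WH^p(\rn)}^p$. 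Plugging this in and using $2^{k_0}\sim\lz$ gives $\lz^p|\{\mu_\Omega^\rho(F_1)>\lz/2\}|\ls\|f\|_{WH^p(\rn)}^p$.

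The $F_2$ part is the crux. Setting $\mathcal O:=\bigcup_{k>k_0}\bigcup_i 8B_i^k$ one has $|\mathcal O|\ls\sum_{k>k_0}\sum_i|B_i^k|\ls 2^{-k_0p}\|f\|_{WH^p(\rn)}^p\sim\lz^{-p}\|f\|_{WH^p(\rn)}^p$, so $\lz^p|\mathcal O|$ is acceptable, and it remains to estimate $\mu_\Omega^\rho(F_2)$ on $(\mathcal O)^\complement$. Here the bound $\int_{(8B)^\complement}|\mu_\Omega^\rho(a)|^p\ls1$ inherited from Theorem \ref{dl.1} is too weak — summed over all $k>k_0$ it diverges — so I would instead record the scale-invariant refinement obtained by reading the estimates of $\mathrm{J_1}$ and $\mathrm{J_2}$ in the proof of Theorem \ref{dl.1} annulus by annulus: for every $(p,\,\fz,\,s)$-atom $a$ supported in a ball $B$ of radius $r$ and centre $x_B$, and every integer $j\ge3$,
\begin{align*}
\int_{\{2^jr\le|x-x_B|<2^{j+1}r\}}|\mu_\Omega^\rho(a)(x)|^p\,dx\ls 2^{-j[p(n+\bz)-n]},
\end{align*}
with $\bz=\min\{\az,\,1/2\}$ and the implied constant independent of the atom. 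Since $n/(n+\bz)<p<1$, I would choose $r_0$ with $n/(n+\bz)<r_0<p$; applying H\"older's inequality on each annulus with exponent $p/r_0$ and summing in $j$ (the series converges exactly because $r_0>n/(n+\bz)$) gives $\int_{(8B)^\complement}|\mu_\Omega^\rho(a)(x)|^{r_0}\,dx\ls|B|^{1-r_0/p}$. Then, using sublinearity together with $r_0\le1$, the inclusion $(\mathcal O)^\complement\subset(8B_i^k)^\complement$ for the relevant $i,k$, and $|\lz_i^k|^{r_0}|B_i^k|^{1-r_0/p}\sim 2^{kr_0}|B_i^k|$,
\begin{align*}
\int_{(\mathcal O)^\complement}|\mu_\Omega^\rho(F_2)(x)|^{r_0}\,dx\ls\sum_{k>k_0}2^{kr_0}\sum_i|B_i^k|\ls\|f\|_{WH^p(\rn)}^p\sum_{k>k_0}2^{k(r_0-p)}\ls 2^{k_0(r_0-p)}\|f\|_{WH^p(\rn)}^p,
\end{align*}
the last step using $r_0<p$. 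Chebyshev's inequality with exponent $r_0$ and $2^{k_0}\sim\lz$ then give $\lz^p|\{x\in(\mathcal O)^\complement:\ \mu_\Omega^\rho(F_2)(x)>\lz/2\}|\ls\lz^{p-r_0}2^{k_0(r_0-p)}\|f\|_{WH^p(\rn)}^p\sim\|f\|_{WH^p(\rn)}^p$, and collecting the pieces completes the proof.

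The main obstacle is precisely this treatment of $F_2$ on $(\mathcal O)^\complement$: the per-atom bound from Theorem \ref{dl.1} contributes a term that is bounded but not summable in the level index $k$, so some integrability must be sacrificed. Choosing the auxiliary exponent $r_0$ strictly between $n/(n+\bz)$ and $p$ is what makes the annular sum (in $j$) and the level sum (in $k$) converge simultaneously — this is where the hypothesis $p>n/(n+\bz)$ re-enters — and it requires checking that the annulus-by-annulus estimate extracted from the proof of Theorem \ref{dl.1} is genuinely a pure power of $2^{-j}$ with no residual power of $r$, which holds because all the hypotheses on $\Omega$ are uniform in its first variable. A minor technical point is the justification of applying $\mu_\Omega^\rho$ termwise to the infinite atomic decomposition, which is handled by the usual truncation/density argument.
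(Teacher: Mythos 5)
Your proof is correct, but it treats the crucial $F_2$ part differently from the paper, and the comparison is worth recording. Both you and the paper split $f=F_1+F_2$ at the level $k_0$ with $2^{k_0}\sim\lz$ and handle $F_1$ identically via $L^2$-boundedness and Chebyshev; both also remove an exceptional set whose measure is $\ls\lz^{-p}\|f\|_{WH^p}^p$ and then need the per-level contributions to $\mu_\Omega^\rho(F_2)$ outside that set to form a convergent geometric series in $k$. The paper achieves this convergence by letting the exceptional balls grow with $k$: it sets $\tilde{B}_i^k:=B(x_i^k,\,8(3/2)^{(k-k_0)p/n}r_i^k)$, and then the annular decay of $\mu_\Omega^\rho$ applied to a single atom yields
$\int_{(\tilde{B}_i^k)^\complement}|\mu_\Omega^\rho(b_i^k)|^p\,dx\ls 2^{kp}|B_i^k|(2/3)^{(k-k_0)p[p(n+\bz)-n]/n}$,
and the factor $(2/3)^{\cdots(k-k_0)}$ is exactly what makes the $k$-sum converge. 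You instead keep the enlargement fixed ($8B_i^k$) but sacrifice integrability, introducing an auxiliary exponent $r_0\in(n/(n+\bz),\,p)$ and applying Chebyshev with exponent $r_0$ rather than $p$; the decay in $k$ then comes from $\sum_{k>k_0}2^{k(r_0-p)}\sim2^{k_0(r_0-p)}$, and the exponents work out because $2^{k_0}\sim\lz$. Both arguments use the same annulus-by-annulus refinement of the proof of Theorem~\ref{dl.1} (your estimate $\int_{\{2^jr\le|x-x_B|<2^{j+1}r\}}|\mu_\Omega^\rho(a)|^p\,dx\ls2^{-j[p(n+\bz)-n]}$ is indeed what the paper's computations for $\mathrm{J_1}$ and $\mathrm{J_2}$ give when read on a single annulus, with no residual power of $r$), and both need $p>n/(n+\bz)$ strictly: the paper to make the $j$-series converge and to get a strictly positive power in the exponent $p[p(n+\bz)-n]/n$, you to fit $r_0$ between $n/(n+\bz)$ and $p$ so that the $j$-series converges at exponent $r_0$ while the $k$-series converges at exponent $r_0-p<0$. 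The paper's route (enlarging balls with $k$) is the device Liu originally used for weak Hardy spaces; your route (a sub-critical auxiliary exponent plus Chebyshev at that exponent) avoids the explicit construction of expanding balls and is somewhat more modular, since it reduces matters to a single scale-invariant $L^{r_0}$ bound for atoms outside $8B$. Either way the hypotheses on $\Omega$ enter only through the per-atom annulus estimate, exactly as you observe.
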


\begin{theorem}\label{dl.5}
Let $0<\rho<n$ and $q>2(n-1)/n$. Suppose that $\Omega\in L^\fz \times L^q(S^{n-1})$.
If there exist two positive constants ${C}$ and $M$ such that,
for any $y$, $h\in\rn$,
\begin{align}\label{p2}
\int_{|x|\geq M|y|}\lf|\frac{\Omega(x+h,\,x-y)}{|x-y|^n}-\frac{\Omega(x+h,\,x)}{|x|^n}\r|\,dx\leq \frac{C}{M},
\end{align}
then $\mu_\Omega^\rho$ is bounded from $WH^1(\rn)$ to $WL^1(\rn)$.
\end{theorem}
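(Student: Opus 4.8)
The plan is to run a Calder\'on--Zygmund decomposition of $WH^1(\rn)$ at every dyadic height and then to dispatch the resulting ``low'' and ``high'' parts by, respectively, the $L^2(\rn)$-boundedness of $\mu_\Omega^\rho$ (Theorem A with $p=2$, which is available because $q>2(n-1)/n$) and the quantitative H\"ormander-type condition \eqref{p2}. Recall that every $f\in WH^1(\rn)$ admits a decomposition $f=\sum_{k\in\zz}\sum_i\lz_{k,i}a_{k,i}$ in which, for each $k$, the $a_{k,i}$ are $(1,\fz,0)$-atoms supported in balls $B_{k,i}=B(x_{k,i},r_{k,i})$ with bounded overlap, $\lz_{k,i}\sim 2^k|B_{k,i}|$, and $\sum_i|B_{k,i}|\ls 2^{-k}\|f\|_{WH^1(\rn)}$. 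It suffices to prove that $\lz\,|\{x\in\rn:\mu_\Omega^\rho(f)(x)>\lz\}|\ls\|f\|_{WH^1(\rn)}$ for every $\lz>0$, which is exactly the asserted $WH^1(\rn)$-$WL^1(\rn)$ boundedness. So fix $\lz>0$, pick $k_0\in\zz$ with $2^{k_0}\le\lz<2^{k_0+1}$, and split $f=F+G$ with $F:=\sum_{k<k_0}\sum_i\lz_{k,i}a_{k,i}$ and $G:=\sum_{k\ge k_0}\sum_i\lz_{k,i}a_{k,i}$; it then suffices to control the level sets of $\mu_\Omega^\rho(F)$ and $\mu_\Omega^\rho(G)$ at height $\lz/2$.

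For $F$ I would use Chebyshev's inequality together with Theorem A. By the bounded overlap of $\{B_{k,i}\}_i$ and $\|a_{k,i}\|_{L^2(\rn)}^2\le|B_{k,i}|^{-1}$ one gets $\|\sum_i\lz_{k,i}a_{k,i}\|_{L^2(\rn)}^2\ls 2^{2k}\sum_i|B_{k,i}|\ls 2^k\|f\|_{WH^1(\rn)}$, hence $\|F\|_{L^2(\rn)}\ls\sum_{k<k_0}2^{k/2}\|f\|_{WH^1(\rn)}^{1/2}\ls 2^{k_0/2}\|f\|_{WH^1(\rn)}^{1/2}$, and therefore
\begin{align*}
\lz\,\big|\{x\in\rn:\mu_\Omega^\rho(F)(x)>\lz/2\}\big|\ls\lz^{-1}\|\mu_\Omega^\rho(F)\|_{L^2(\rn)}^2\ls\lz^{-1}\|F\|_{L^2(\rn)}^2\ls\lz^{-1}2^{k_0}\|f\|_{WH^1(\rn)}\ls\|f\|_{WH^1(\rn)}.
\end{align*}

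For $G$ I would introduce the exceptional set $\mathcal O:=\bigcup_{k\ge k_0}\bigcup_i M_kB_{k,i}$, where $M_kB_{k,i}$ denotes the ball concentric with $B_{k,i}$ of radius $M_kr_{k,i}$ and $M_k:=2^{\tz(k-k_0)}$ for a small fixed $\tz\in(0,1/n)$. Since $\tz n<1$,
\begin{align*}
|\mathcal O|\ls\sum_{k\ge k_0}M_k^n\sum_i|B_{k,i}|\ls\|f\|_{WH^1(\rn)}\sum_{k\ge k_0}2^{\tz n(k-k_0)}2^{-k}\ls 2^{-k_0}\|f\|_{WH^1(\rn)}\sim\lz^{-1}\|f\|_{WH^1(\rn)},
\end{align*}
so $\lz\,|\mathcal O|\ls\|f\|_{WH^1(\rn)}$. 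Off $\mathcal O$, Chebyshev's inequality and the subadditivity of $\mu_\Omega^\rho$ yield
\begin{align*}
\lz\,\big|\{x\notin\mathcal O:\mu_\Omega^\rho(G)(x)>\lz/2\}\big|\ls\sum_{k\ge k_0}\sum_i\lz_{k,i}\int_{(M_kB_{k,i})^\complement}\mu_\Omega^\rho(a_{k,i})(x)\,dx.
\end{align*}
Here the single-atom integral is estimated exactly as the term $\mathrm{J}$ in the proof of Theorem \ref{dl.2}, with the dilate $8B_{k,i}$ replaced by $M_kB_{k,i}$: splitting the $t$-integral at $t=|x-x_{k,i}|+2r_{k,i}$, the range $t\le|x-x_{k,i}|+2r_{k,i}$ (where no cancellation is available) is treated directly and produces a gain $(M_k)^{-1/2}$ coming from the decay $|x-x_{k,i}|^{-n-1/2}$, exactly as for $\mathrm{J_1}$; in the range $t>|x-x_{k,i}|+2r_{k,i}$ one invokes $\int a_{k,i}=0$, the change of variables recentring $B_{k,i}$ at the origin (which is precisely why the shift parameter $h$ appears in \eqref{p2}), the splitting of the $\rho$-powers as for $\mathrm{J_{21}}$, and then \eqref{p2} with $M=M_k$ and $h=x_{k,i}$, which produces a gain $(M_k)^{-1}$. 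Hence $\int_{(M_kB_{k,i})^\complement}\mu_\Omega^\rho(a_{k,i})\,dx\ls M_k^{-1/2}$, so that
\begin{align*}
\sum_{k\ge k_0}\sum_i\lz_{k,i}\int_{(M_kB_{k,i})^\complement}\mu_\Omega^\rho(a_{k,i})\,dx\ls\sum_{k\ge k_0}M_k^{-1/2}2^k\sum_i|B_{k,i}|\ls\|f\|_{WH^1(\rn)}\sum_{k\ge k_0}2^{-\tz(k-k_0)/2}\ls\|f\|_{WH^1(\rn)}.
\end{align*}
Combining this with the bounds for $F$ and for $\mathcal O$ completes the proof.

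I expect the main obstacle to be this high-part estimate, and more precisely the need to let the dilation factors $M_k$ grow with $k$ so that the measure $|\mathcal O|$ and the series $\sum_{k\ge k_0}\sum_i\lz_{k,i}\,\|\mu_\Omega^\rho(a_{k,i})\|_{L^1((M_kB_{k,i})^\complement)}$ converge simultaneously: this is exactly where the quantitative decay $C/M$ in \eqref{p2} is essential --- a fixed-$M$, purely qualitative H\"ormander condition would leave a sum over the levels $k\ge k_0$ that diverges --- as is the translation parameter $h$ in \eqref{p2}, which cannot be dispensed with because the kernel $\Omega(x,\cdot)$ is not of convolution type. The remaining work is the routine bookkeeping carried over from the proof of Theorem \ref{dl.2}: Minkowski's inequality for the $t$-integration, the identity $\int_{|x-x_{k,i}|}^\fz t^{-2\rho-1}\,dt\sim|x-x_{k,i}|^{-2\rho}$, and the mean-value passage between the exponents $n-\rho$ and $n$.
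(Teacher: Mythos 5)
Your proposal is correct and follows essentially the same route as the paper's own proof: both use the weak-$H^1$ atomic decomposition split at the level $k_0$ with $2^{k_0}\le\lz<2^{k_0+1}$, bound the low part in $L^2$ via Theorem A and Chebyshev, introduce an exceptional set of dilated balls whose radii grow geometrically in $k-k_0$, and estimate the high part off the exceptional set by splitting the $t$-integral at $t=|x-x^k_i|+2r^k_i$, gaining decay from the mean-value/H\"older argument in the near range and from the quantitative H\"ormander condition \eqref{p2} with $M=M_k$ in the far range. The only cosmetic difference is that you parameterize the dilation by $M_k=2^{\tz(k-k_0)}$ with $\tz\in(0,1/n)$ whereas the paper uses the specific choice $8(3/2)^{(k-k_0)/n}$, which corresponds to $\tz=\log_2(3/2)/n$.
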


\begin{theorem}\label{dl.6}
Let $0<\rho<n$ and $q>2(n-1)/n$.
Suppose $\Omega\in L^\fz \times L^q(S^{n-1})$. If
\begin{align}\label{p3}
\int_0^1\frac{\omega(\delta)}{\delta}(1+|\log{\delta}|)^\sz\,d\delta<\fz \ \mathrm{for \ some} \ \sz>1,
\end{align}
then $\mu_\Omega^\rho$ is bounded from $WH^1(\rn)$ to $WL^1(\rn)$.
\end{theorem}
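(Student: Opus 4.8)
The plan is to follow the scheme of the proof of Theorem \ref{dl.5}, with the rate Hörmander condition \eqref{p2} replaced by a quantitative tail bound manufactured from Lemma \ref{l3.6} together with the logarithmic Dini condition \eqref{p3}. First I would recall the atomic decomposition of the weak Hardy space (the same one underlying the proofs of Theorems \ref{dl.4} and \ref{dl.5}): every $f\in WH^1(\rn)$ with $\|f\|_{WH^1(\rn)}\le1$ admits a representation $f=\sum_{k\in\zz}\sum_i\lz_{k,i}a_{k,i}$, where each $a_{k,i}$ is a $(1,\fz,0)$-atom supported in a cube $Q_{k,i}$, $|\lz_{k,i}|\ls 2^k|Q_{k,i}|$, for each fixed $k$ the cubes $\{Q_{k,i}\}_i$ have bounded overlap, and $\sum_i|Q_{k,i}|\ls 2^{-k}$. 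Given $\lz>0$, I would choose $k_0\in\zz$ with $2^{k_0}\le\lz<2^{k_0+1}$ and split $f=f_1+f_2$, where $f_1:=\sum_{k<k_0}\sum_i\lz_{k,i}a_{k,i}$ and $f_2:=\sum_{k\ge k_0}\sum_i\lz_{k,i}a_{k,i}$. It then suffices to bound $\lz\,|\{x\in\rn:\mu_\Omega^\rho(f_j)(x)>\lz/2\}|$ uniformly in $\lz$ for $j=1,2$.

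For the low part $f_1$ I would use the $L^2(\rn)$-boundedness of $\mu_\Omega^\rho$, which is Theorem A with $p=2$ (the hypothesis $q>2(n-1)/n$ is precisely what is needed there). By the bounded overlap at each level, $\|\sum_i\lz_{k,i}a_{k,i}\|_{L^2(\rn)}^2\ls\sum_i 2^{2k}|Q_{k,i}|\ls 2^k$, hence $\|f_1\|_{L^2(\rn)}\ls\sum_{k<k_0}2^{k/2}\ls 2^{k_0/2}\sim\lz^{1/2}$, and Chebyshev's inequality gives $\lz\,|\{\mu_\Omega^\rho(f_1)>\lz/2\}|\ls\lz^{-1}\|\mu_\Omega^\rho(f_1)\|_{L^2(\rn)}^2\ls\lz^{-1}\|f_1\|_{L^2(\rn)}^2\ls1$.

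For the high part $f_2$, fix a constant $c\in(0,1/n)$, set $M_k:=2^{1+c(k-k_0)}$ for $k\ge k_0$, and let $\mathcal E:=\bigcup_{k\ge k_0}\bigcup_i M_kQ_{k,i}$, where $M_kQ_{k,i}$ is the concentric dilate. Since $cn<1$,
$$|\mathcal E|\ls\sum_{k\ge k_0}M_k^n\sum_i|Q_{k,i}|\ls 2^{-k_0}\sum_{k\ge k_0}2^{(cn-1)(k-k_0)}\ls 2^{-k_0}\sim\lz^{-1},$$
so $\lz\,|\mathcal E|\ls1$, and it remains to estimate $\int_{\mathcal E^\complement}\mu_\Omega^\rho(f_2)(x)\,dx$. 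The \emph{key ingredient} is the quantitative tail bound: for any $(1,\fz,0)$-atom $a$ supported in a cube $Q$ and any $M\ge2$,
$$\int_{(MQ)^\complement}\mu_\Omega^\rho(a)(x)\,dx\ls(1+\log M)^{-\sz}.$$
This is proved by repeating, over the region $(MQ)^\complement$ and with $p=1$, the estimates of $\mathrm{J_1}$ and $\mathrm{J_2}$ from the proof of Theorem \ref{dl.1}: the $\mathrm{J_1}$-type piece contributes $\ls M^{-1/2}$, while in the $\mathrm{J_2}$-type piece one sums Lemma \ref{l3.6} over the dyadic annuli $\{2^jr\le|x|<2^{j+1}r\}$ with $2^j\gtrsim M$; the resulting integral of $\omega(\delta)/\delta$ lives on an interval $(0,\varepsilon)$ with $\varepsilon\ls 1/M$, and the elementary inequality $\int_0^{\varepsilon}\frac{\omega(\delta)}{\delta}\,d\delta\le(1+|\log\varepsilon|)^{-\sz}\int_0^1\frac{\omega(\delta)}{\delta}(1+|\log\delta|)^{\sz}\,d\delta$ converts \eqref{p3} into the stated $(1+\log M)^{-\sz}$ decay. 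Granting this, Chebyshev's inequality, $|\lz_{k,i}|\ls 2^k|Q_{k,i}|$, and $\sum_i|Q_{k,i}|\ls 2^{-k}$ yield
$$\lz\,\lf|\lf\{x\in\mathcal E^\complement:\mu_\Omega^\rho(f_2)(x)>\lz/2\r\}\r|\ls\int_{\mathcal E^\complement}\mu_\Omega^\rho(f_2)\ls\sum_{k\ge k_0}\sum_i 2^k|Q_{k,i}|\,(1+\log M_k)^{-\sz}\ls\sum_{k\ge k_0}\big(1+c(k-k_0)\big)^{-\sz},$$
and the last series is finite precisely because $\sz>1$. Combining the three estimates gives $\|\mu_\Omega^\rho f\|_{WL^1(\rn)}\ls\|f\|_{WH^1(\rn)}$.

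The main obstacle is the simultaneous calibration in the high part: the dilation factors $M_k$ must grow fast enough (exponentially in $k-k_0$, so that $\log M_k\sim k-k_0$ and $\sum_k(1+\log M_k)^{-\sz}$ converges under $\sz>1$) yet slowly enough ($M_k=2^{1+c(k-k_0)}$ with $cn<1$) that the exceptional set $\mathcal E$ still satisfies $|\mathcal E|\ls\lz^{-1}$; and one must verify carefully that the $\omega(\delta)/\delta$-integral produced by Lemma \ref{l3.6} indeed lives on $(0,\varepsilon)$ with $\varepsilon\ls 1/M$, which is exactly what lets the extra weight $(1+|\log\delta|)^{\sz}$ in \eqref{p3} be traded for decay in $M$. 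The remaining points — the bounded-overlap bookkeeping in the $L^2$ estimate and the justification that $\mu_\Omega^\rho$ may be applied term by term to the atomic series — are routine and handled by the usual density arguments.
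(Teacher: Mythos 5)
Your proposal is correct. The paper itself offers no proof of Theorem \ref{dl.6}: it merely says the argument runs as in the proof of Theorem 1.5 of Ding--Lin--Shao \cite{dls04} and leaves the details to the reader. What you have written is precisely the argument the paper is pointing at, executed within the framework the paper already sets up for Theorems \ref{dl.4} and \ref{dl.5}: split $f$ at the level $k_0$ with $2^{k_0}\le\lambda<2^{k_0+1}$, handle the low part by the $L^2$ bound from Theorem A and Chebyshev, and handle the high part by removing an exceptional set built from geometrically growing dilates $M_kQ_{k,i}$ with $M_k=2^{1+c(k-k_0)}$, $c\in(0,1/n)$ --- the same template as the $8(3/2)^{(k-k_0)p/n}$ dilates used in the proof of Theorem \ref{dl.4}. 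The genuine new ingredient, and the one you identify correctly, is the quantitative tail bound $\int_{(MQ)^\complement}\mu_\Omega^\rho(a)\ls(1+\log M)^{-\sigma}$, obtained by running the $\mathrm{J}_1$/$\mathrm{J}_2$ (equivalently $\mathrm{K}_1$/$\mathrm{K}_2$) estimates with $p=1$ over $(MQ)^\complement$: the $\mathrm{J}_1$-type piece gives $M^{-1/2}$, and in the $\mathrm{J}_2$-type piece the dyadic $\omega$-integrals from Lemma \ref{l3.6} have bounded overlap and union inside $(0,\,c/M)$, so the weight $(1+|\log\delta|)^\sigma$ in \eqref{p3} can be peeled off to produce the logarithmic decay. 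The calibration you discuss --- $cn<1$ so that $|\mathcal E|\ls\lambda^{-1}$, $M_k$ exponential so that $\log M_k\sim k-k_0$ and $\sum_k(1+c(k-k_0))^{-\sigma}$ converges under $\sigma>1$ --- is exactly the balance that makes the theorem work, and it parallels the bookkeeping in the displayed estimate $\mathrm{K}_1+\mathrm{K}_2\ls 2^{kp}|B_i^k|(2/3)^{\cdots}$ in Theorem \ref{dl.4}. One point worth stating explicitly in a polished write-up, which you leave implicit, is that Lemma \ref{l3.6} is applied after recentering the atom at the origin via the shift $h=x_i^k$ built into the lemma, just as in the estimate of $\mathrm{K}_{22}$ in the proof of Theorem \ref{dl.5}.
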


\begin{corollary}\label{tl.7}
Let $0<\rho<n$ and $q>2(n-1)/n$.
Suppose $\Omega\in L^\fz \times L^q(S^{n-1})$. If \eqref{p2} or \eqref{p3} holds,
then $\mu_\Omega^\rho$ is bounded from $L^1(\rn)$ to $WL^1(\rn)$.
\end{corollary}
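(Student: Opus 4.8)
The plan is to read off Corollary \ref{tl.7} from Theorems \ref{dl.5} and \ref{dl.6} together with the classical continuous inclusion $L^1(\rn)\hookrightarrow WH^1(\rn)$. Indeed, once we know that $\|f\|_{WH^1(\rn)}\ls\|f\|_{L^1(\rn)}$ for every $f\in L^1(\rn)$, the corollary is immediate: its hypotheses are precisely those of Theorem \ref{dl.5} when \eqref{p2} holds, and precisely those of Theorem \ref{dl.6} when \eqref{p3} holds, so the relevant theorem gives $\|\mu_\Omega^\rho(f)\|_{WL^1(\rn)}\ls\|f\|_{WH^1(\rn)}\ls\|f\|_{L^1(\rn)}$. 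Thus the whole task reduces to recording the embedding.

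To establish the inclusion I would invoke the (grand, or nontangential) maximal-function characterization of $WH^1(\rn)$, namely that $\|f\|_{WH^1(\rn)}$ is equivalent to $\|\mathcal M_N f\|_{WL^1(\rn)}$ for $N$ large, where $\mathcal M_N$ is formed from a fixed bounded family of normalized test functions and their dilations. For $f\in L^1(\rn)$ one has the pointwise bound $\mathcal M_N f\ls Mf$, with $M$ the Hardy--Littlewood maximal operator: if $\varphi$ is an admissible profile supported in the unit ball, $t>0$ and $|y-x|<t$, then $B(y,t)\subset B(x,2t)$ and hence
\begin{align*}
\lf|\int_\rn f(z)\,t^{-n}\varphi\lf(\frac{y-z}{t}\r)\,dz\r|
&\ls t^{-n}\int_{B(y,t)}|f(z)|\,dz\\
&\ls\frac1{|B(x,2t)|}\int_{B(x,2t)}|f(z)|\,dz\ls Mf(x).
\end{align*}
Since $M$ is of weak type $(1,1)$, it follows that $\|f\|_{WH^1(\rn)}\sim\|\mathcal M_N f\|_{WL^1(\rn)}\ls\|Mf\|_{WL^1(\rn)}\ls\|f\|_{L^1(\rn)}$. (Here one keeps in mind that $\|\cdot\|_{WL^1(\rn)}$ is only a quasi-norm, so this chain is to be read as a chain of bounds for the distribution functions of the maximal functions in question.)

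A self-contained alternative, bypassing the embedding, is to run the Calder\'{o}n--Zygmund decomposition directly. Given $f\in L^1(\rn)$ and $\lz>0$, write $f=g+b$ at height $\lz$, with $b=\sum_j b_j$, each $b_j$ supported in a cube $Q_j$, of mean zero, with $\|b_j\|_{L^1(\rn)}\ls\lz|Q_j|$ and $\sum_j|Q_j|\ls\lz^{-1}\|f\|_{L^1(\rn)}$, while $\|g\|_{L^\fz(\rn)}\ls\lz$ and $\|g\|_{L^1(\rn)}\le\|f\|_{L^1(\rn)}$. The good part is handled by Theorem A with $p=2$ and Chebyshev's inequality, using $\|g\|_{L^2(\rn)}^2\le\|g\|_{L^\fz(\rn)}\|g\|_{L^1(\rn)}\ls\lz\|f\|_{L^1(\rn)}$. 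For the bad part, off a fixed dilate $\bigcup_j Q_j^\ast$ of the cubes (of measure $\ls\lz^{-1}\|f\|_{L^1(\rn)}$), one estimates each $\int_{(Q_j^\ast)^\complement}\mu_\Omega^\rho(b_j)(x)\,dx$ by $\|b_j\|_{L^1(\rn)}$ exactly as in the $\mathrm{J_2}$-estimate in the proof of Theorem \ref{dl.5} --- this is where the vanishing moment of $b_j$ and hypothesis \eqref{p2} (or \eqref{p3}) enter --- and then applies Chebyshev's inequality once more. I expect the only genuinely non-formal point to be this last one: verifying that the $\mathrm{J_2}$-type argument of Theorem \ref{dl.5}, written there for an atom supported in a ball centered at the origin, goes through for an arbitrary Calder\'{o}n--Zygmund cube $Q_j$. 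It does, because that argument never used the $L^\fz$ size of the atom, only its support, its cancellation, and the H\"{o}rmander/Dini-type control on $\Omega$.
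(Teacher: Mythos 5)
Your proof is correct, and the primary route via the embedding $L^1(\rn)\hookrightarrow WH^1(\rn)$ (from the pointwise bound $\mathcal{M}_N f\lesssim Mf$ for $N$ large together with the weak $(1,1)$ inequality for the Hardy--Littlewood maximal operator) is exactly the natural way to deduce the corollary from Theorems \ref{dl.5} and \ref{dl.6}; the paper itself gives no details beyond citing Remark 1.8 of \cite{dls04}, and your argument reconstructs what that reference is invoking. Your Calder\'{o}n--Zygmund alternative is a sound self-contained fallback, and you correctly identify the one point that needs checking --- that the $\mathrm{K_2}$-type estimate in Theorem \ref{dl.5} still closes when the $L^\infty$ control of an atom is replaced by the $L^1$ control $\|b_j\|_{L^1(\rn)}\lesssim\lambda|Q_j|$, which it does since that estimate only consumes support, cancellation, and an $L^1$ bound on the bad pieces.
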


We need the following atomic decomposition theory of weak Hardy space.

\begin{lemma}\label{whp}{\rm{(\cite{l95})}}
Let $0<p\le1$. For every $f\in WH^p(\rn)$,
there exists a sequence of bounded measurable functions $\{f_k\}_{k=-\fz}^\fz$ such that
\begin{enumerate}
\item [\rm{(i)}] $f=\sum_{k=-\fz}^\fz f_k$ in the sense of distributions.

\item [\rm{(ii)}] Each $f_k$ can be further decomposed into $f_k=\sum_i b^k_i$ and $\{b^k_i\}$ satisfies

 {\rm{\quad(a)}} $\supp{(b^k_i)}\subset B^k_i:=B(x^k_i,\,r^k_i)$;
Moreover, $\sum_i \chi_{B^k_i}(x)\le C$ and $\sum_i |B^k_i|\le c\,2^{-kp}$, where $c\sim\|f\|^p_{WH^p(\rn)}$;

 {\rm{\quad(b)}} $\|b^k_i\|_{L^\fz(\rn)}\le C2^k$, where $C$ is independent of $k$ and $i$;

 {\rm{\quad(c)}} $\int_\rn b^k_i(x)x^\gz\,dx=0$ for any multi-index $\gz$ with $|\gz|\leq \lfloor n(1/p-1)\rfloor$.
\end{enumerate}

Conversely, if distribution $f$ has a decomposition satisfying $\mathrm{(i)}$ and $\mathrm{(ii)}$, then $f\in WH^p(\rn)$.
Moreover, we have $\|f\|^p_{WH^p(\rn)}\sim c$.
\end{lemma}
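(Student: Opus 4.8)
The plan is to prove Lemma~\ref{whp} by the Fefferman--Soria style argument (as carried out in \cite{l95}): one slices the (grand) maximal function $Mf$ at the dyadic heights $2^k$ and runs a Calder\'on--Zygmund decomposition at each level. Recall that $\|f\|_{WH^p(\rn)}$ is by definition the $WL^p(\rn)$-quasi-norm of $Mf$.

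For the \emph{forward direction}, for each $k\in\zz$ I would set $\boz_k:=\{x\in\rn:\ Mf(x)>2^k\}$; this is open and $|\boz_k|\ls 2^{-kp}\|f\|^p_{WH^p(\rn)}$. Next I would perform the Calder\'on--Zygmund decomposition of the distribution $f$ at height $2^k$ relative to $\boz_k$ --- the same construction that yields the atomic decomposition of $H^p(\rn)$ in \cite[Chapter~2]{l95} --- obtaining $f=g^k+\sum_i b^k_i$, where the balls $\{B^k_i\}_i$ come from a Whitney covering of $\boz_k$ and hence satisfy $\sum_i\chi_{B^k_i}\ls1$ and $\sum_i|B^k_i|\ls|\boz_k|$, each $b^k_i$ is supported in a fixed dilate of $B^k_i$ with $\|b^k_i\|_{L^\fz(\rn)}\ls2^k$ and $\int_\rn b^k_i(x)x^\gz\,dx=0$ for $|\gz|\le\lfloor n(1/p-1)\rfloor$, and the good part obeys $\|g^k\|_{L^\fz(\rn)}\ls2^k$ with $g^k=f$ on $\boz_k^\complement$. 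Then I would define the pieces of the desired decomposition by the telescoping difference $f_k:=g^{k+1}-g^k$. Since $\boz_{k+1}\subset\boz_k$ and $g^j=f$ off $\boz_j$, one gets $f_k=0$ off $\boz_k$; introducing a partition of unity $\{\eta^k_i\}_i$ subordinate to $\{B^k_i\}_i$ and subtracting from each localized piece of $g^{k+1}-g^k$ the polynomial of degree $\le\lfloor n(1/p-1)\rfloor$ that restores the vanishing moments, I would let $b^k_i$ be the resulting localized functions. A direct check then gives (a) $\supp b^k_i\subset B^k_i$, (b) $\|b^k_i\|_{L^\fz(\rn)}\ls2^k$ (because $\|g^{k+1}\|_{L^\fz(\rn)},\|g^k\|_{L^\fz(\rn)}\ls2^k$), and (c) the moment conditions by construction, while the Whitney structure carries over $\sum_i\chi_{B^k_i}\ls1$ and $\sum_i|B^k_i|\ls|\boz_k|\ls2^{-kp}\|f\|^p_{WH^p(\rn)}$. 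Finally $\sum_{k=-N}^{M}f_k=g^{M+1}-g^{-N}$, and $g^{-N}\to0$ in $\csz$ since $\|g^{-N}\|_{L^\fz(\rn)}\ls2^{-N}\to0$, while $g^{M+1}\to f$ in $\csz$ since $g^{M+1}=f$ off $\boz_{M+1}$ with $|\boz_{M+1}|\to0$; hence $f=\sum_k f_k$ in the sense of distributions, giving (i) and (ii).

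For the \emph{converse}, I would assume $f=\sum_k f_k$ with $f_k=\sum_i b^k_i$ as in (i)--(ii), so the hypothesis provides a constant $c$ with $\sum_i|B^k_i|\le c\,2^{-kp}$ for every $k$, and I would estimate $\lz^p|\{Mf>\lz\}|$ uniformly in $\lz>0$. Fixing $k_0$ with $2^{k_0}\le\lz<2^{k_0+1}$, split $f=F_1+F_2$, $F_1:=\sum_{k\le k_0}f_k$, $F_2:=\sum_{k>k_0}f_k$. By the bounded overlap, $\|f_k\|_{L^\fz(\rn)}\ls2^k$, so $\|F_1\|_{L^\fz(\rn)}\ls2^{k_0}\ls\lz$ and $\{MF_1>\lz/2\}$ is empty for a suitable absolute constant; hence it remains to bound, for $F_2$,
\begin{align*}
\lf|\lf\{MF_2>\lz/2\r\}\r|\le\sum_{k>k_0}\sum_i|2B^k_i|+\lf|\lf\{x\notin\bigcup_{k>k_0}\bigcup_i 2B^k_i:\ MF_2(x)>\lz/2\r\}\r|.
\end{align*}
The first sum is $\ls c\sum_{k>k_0}2^{-kp}\ls c\,2^{-k_0p}\ls c\,\lz^{-p}$ by $0<p\le1$, and for the second I would use the standard pointwise bound coming from the vanishing moments of $b^k_i$ up to order $s_0:=\lfloor n(1/p-1)\rfloor$, namely $Mb^k_i(x)\ls 2^k (r^k_i)^{n+s_0+1}|x-x^k_i|^{-(n+s_0+1)}$ for $x\notin 2B^k_i$, where $n+s_0+1>n/p$; summing in $i$ and $k>k_0$ and applying Chebyshev's inequality bounds the second term by $\ls c\,\lz^{-p}$ as well. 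Combining, $\sup_{\lz>0}\lz^p|\{Mf>\lz\}|\ls c$, i.e. $f\in WH^p(\rn)$ with $\|f\|^p_{WH^p(\rn)}\ls c$, and with the forward estimate this gives $\|f\|^p_{WH^p(\rn)}\sim c$.

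The step I expect to be the main obstacle is the extraction of the atoms $b^k_i$ from the telescoping difference $g^{k+1}-g^k$ in the forward direction: the support property, the $L^\fz$ bound, the moment cancellations and the bounded overlap must be arranged to hold \emph{simultaneously}, and the partition of unity and the moment-correcting polynomials interact in a way that needs careful bookkeeping. In the converse direction the only delicate point is the borderline nature of the tail estimate for $F_2$, which closes precisely because $s_0+1>n(1/p-1)$ and because $\sum_{k>k_0}2^{-kp}$ converges for $0<p\le1$.
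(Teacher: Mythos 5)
The paper itself offers no proof of this lemma (it is simply quoted from \cite{l95}), so the only question is whether your argument is sound. Your forward direction is the standard telescoping Calder\'on--Zygmund construction and is acceptable as a sketch (though ``$g^{M+1}\to f$ because $|\{Mf>2^{M+1}\}|\to0$'' is not by itself a proof that a distribution supported in a shrinking set tends to zero; one must estimate the bad part $\sum_i b^{M+1}_i$ directly). The genuine gap is in the converse direction, at the final summation over $k$. For $x\notin 2B^k_i$ your pointwise bound gives
\begin{align*}
\int_{(2B^k_i)^\complement}\bigl(Mb^k_i(x)\bigr)^p\,dx\ \ls\ 2^{kp}\,(r^k_i)^{(n+s_0+1)p}\,(r^k_i)^{\,n-(n+s_0+1)p}\ =\ 2^{kp}\,|B^k_i|,
\end{align*}
and summing in $i$ via $\sum_i|B^k_i|\le c\,2^{-kp}$ yields the bound $c$ for each fixed $k$: the factors $2^{kp}$ and $2^{-kp}$ cancel exactly, so the subsequent sum over $k>k_0$ is $\sum_{k>k_0}c=\fz$. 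The series you invoke at the end, $\sum_{k>k_0}2^{-kp}$, is not the one that actually appears; after Chebyshev and the $p$-triangle inequality there is no geometric decay left in $k$, and the argument does not close with the fixed dilates $2B^k_i$.

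The standard repair --- and precisely the device this paper uses in its own proof of Theorem \ref{dl.4} --- is to excise the \emph{growing} dilates $\tilde{B}^k_i:=B(x^k_i,\,8(3/2)^{(k-k_0)p/n}r^k_i)$ instead of $2B^k_i$. Their total measure is still acceptable, since $\sum_{k>k_0}\sum_i|\tilde{B}^k_i|\ls c\,2^{-k_0p}\sum_{k>k_0}(3/4)^{(k-k_0)p}\ls c\,\lz^{-p}$ because $(3/4)^p<1$; and outside $\tilde{B}^k_i$ the same pointwise decay now produces the extra factor $\bigl((3/2)^{(k-k_0)p/n}\bigr)^{n-(n+s_0+1)p}=(2/3)^{\theta(k-k_0)}$ with $\theta:=p\bigl((n+s_0+1)p-n\bigr)/n>0$ --- this is where the condition $s_0+1>n(1/p-1)$ is actually consumed --- so the $k$-sum becomes geometric and the estimate $\sup_{\lz>0}\lz^p|\{Mf>\lz\}|\ls c$ follows. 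With this modification your converse is correct; without it, the key step fails.
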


\begin{proof}[Proof of Theorem \ref{dl.4}]
To show Theorem \ref{dl.4}, it suffices to prove that
there exist a positive constant $C$ such that, for any $f\in WH^p(\rn)$ and $\lz\in(0,\,\fz)$,
\begin{align*}
\lf|\lf\{x\in\rn: \ \mu_\Omega^\rho(f)(x)>\lz\r\}\r|\le C{\lz^{-p}}{\|f\|_{WH^p(\rn)}^p}.
\end{align*}
To this end, we choose integer $k_0$ satisfying $2^{k_0}\le\lz<2^{k_0+1}$.
By Lemma \ref{whp}, we may write
\begin{align*}
f=\sum_{k=-\fz}^{k_0}\sum_{i}b^k_i+\sum_{k=k_0+1}^{\fz}\sum_i b^k_i=:F_1+F_2,
\end{align*}
where $b^k_i$ satisfies (a), (b) and (c) of Lemma \ref{whp}.

We estimate $F_1$ first. For $F_1$, we claim that $\|F_1\|_{L^2(\rn)}\ls\lz^{1-{p}/{2}}\|f\|^{{p}/{2}}_{WH^p(\rn)}$.
In fact, by Minkowski's inequality and the finite overlapped property of $\{B^k_i\}$, we obtain that
\begin{align*}
\|F_1\|_{L^2(\rn)}
&\le\sum_{k=-\fz}^{k_0}\sum_{i}\lf\|b^k_i\r\|_{L^2(\rn)}
\le\sum_{k=-\fz}^{k_0}\sum_{i}\lf\|b^k_i\r\|_{L^\fz(\rn)}\lf|B^k_i\r|^{1/2} \\
&\ls\sum_{k=-\fz}^{k_0}2^k\lf(\sum_{i}\lf|B^k_i\r|\r)^{1/2}\ls\sum_{k=-\fz}^{k_0}2^{k(1-{p}/{2})}\|f\|_{WH^p(\rn)}^{{p}/{2}}
\sim\lz^{(1-{p}/{2})}\|f\|_{WH^p(\rn)}^{{p}/{2}}.
\end{align*}
From Theorem A and the above claim, we deduce that
\begin{align*}
\lf|\lf\{x\in\rn: \ \mu_\Omega^\rho(F_1)(x)>\lz\r\}\r|
&\le \lz^{-2}\lf\|\mu_\Omega^\rho(F_1)\r\|^2_{L^2(\rn)} \\
&\ls \lz^{-2}\lf\|F_1\r\|^2_{L^2(\rn)}\ls\lz^{-p}\|f\|_{WH^p(\rn)}^p.
\end{align*}

Next let us deal with $F_2$. Set
\begin{align*}
A_{k_0}:=\bigcup_{k=k_0+1}^\fz\bigcup_i\tilde{B_i^k}\, ,
\end{align*}
where
$\tilde{B_i^k}:=B(x^k_i,\,8(3/2)^{{(k-k_0)p}/{n}}\,r^k_i)$.
To show that
\begin{align*}
\lf|\lf\{x\in\rn: \ \mu_\Omega^\rho(F_2)(x)>\lz\r\}\r|\ls{\lz^{-p}}{\|f\|_{WH^p(\rn)}^p},
\end{align*}
we cut $|\{x\in\rn: \ \mu_\Omega^\rho(F_2)(x)>\lz\}|$ into $A_{k_0}$ and
$\{x\in (A_{k_0})^\complement: \ \mu_\Omega^\rho(F_2)(x)>\lz\}$.

For $A_{k_0}$, a routine computation gives rise to
\begin{align*}
|A_{k_0}|
&\le \sum_{k=k_0+1}^{\fz}\sum_i\lf|\tilde{B_i^k}\r|\sim\sum_{k=k_0+1}^{\fz}\sum_i\lf(\frac{3}{2}\r)^{(k-k_0)p}\lf|{B_i^k}\r| \\
&\ls\sum_{k=k_0+1}^{\fz}\lf(\frac{3}{2}\r)^{(k-k_0)p}2^{-kp}\|f\|^p_{WH^p(\rn)}\sim\lz^{-p}\|f\|_{WH^p(\rn)}^p.
\end{align*}

It remains to estimate $(A_{k_0})^\complement$.
Applying the inequality $\|\cdot\|_{\ell^1}\le\|\cdot\|_{\ell^p}$ with $p\in(0,\,1)$, we conclude that
\begin{align*}
\lz^p\lf|\lf\{x\in{\lf(A_{k_0}\r)^\complement}: \ \mu_\Omega^\rho(F_2)(x)>\lz\r\}\r|
&\le \int_{\lf(A_{k_0}\r)^\complement}\lf|\mu_\Omega^\rho(F_2)(x)\r|^p\,dx \\ \nonumber
&\le \int_{\lf(A_{k_0}\r)^\complement}\sum_{k=k_0+1}^{\fz}\sum_i\lf|\mu_\Omega^\rho(b^k_i)(x)\r|^p\,dx \\ \nonumber
&\le \sum_{k=k_0+1}^{\fz}\sum_i\int_{\lf({\tilde{B_i^k}}\r)^\complement}\lf|\mu_\Omega^\rho(b^k_i)(x)\r|^p\,dx \\ \nonumber
&=:\sum_{k=k_0+1}^{\fz}\sum_i(\mathrm{K_1+K_2}),
\end{align*}
where
\begin{align*}
\mathrm{K_1}=\int_{\lf({\tilde{B_i^k}}\r)^\complement}\lf(\int_0^{|x-x^k_i|+2r^k_i}\lf|\int_{|x-y|\le t}
 \frac{\Omega(x,\,x-y)}{|x-y|^{n-\rho}}b^k_i(y)\,dy\r|^2\frac{dt}{t^{2\rho+1}}\r)^{{p}/{2}}\,dx
\end{align*}
and
\begin{align*}
\mathrm{K_2}=\int_{\lf({\tilde{B_i^k}}\r)^\complement}\lf(\int^\fz_{|x-x^k_i|+2r^k_i}\lf|\int_{|x-y|\le t}
 \frac{\Omega(x,\,x-y)}{|x-y|^{n-\rho}}b^k_i(y)\,dy\r|^2\frac{dt}{t^{2\rho+1}}\r)^{{p}/{2}}\,dx.
\end{align*}

The estimates of $\mathrm{K_1}$ and $\mathrm{K_2}$
are quite similar to that given earlier for $\mathrm{J_1}$ and $\mathrm{J_2}$ in Theorem \ref{dl.1}, respectively,
and hence no proof will be given here.
We directly give the estimate of $\mathrm{K_1}+\mathrm{K_2}$ below,
\begin{align*}
\mathrm{K_1}+\mathrm{K_2}\ls2^{kp}\lf|B^k_i\r|\lf(\frac{2}{3}\r)^{\frac{p(pn+p\bz-n)}{n}(k-k_0)},
\end{align*}
which, together with $p>n/(n+\bz)$, implies that
\begin{align*}
\lz^p\lf|\lf\{x\in{\lf(A_{k_0}\r)^\complement}: \ \mu_\Omega^\rho(F_2)(x)>\lz\r\}\r|
&\ls\sum_{k=k_0+1}^{\fz}\sum_i(\mathrm{K_1+K_{2}}) \\
&\ls\sum_{k=k_0+1}^{\fz}\sum_i 2^{kp}\lf|B^k_i\r|\lf(\frac{2}{3}\r)^{\frac{p(np+\bz p-n)}{n}(k-k_0)} \\
&\ls\|f\|^p_{WH^p(\rn)}\sum_{k=k_0+1}^{\fz}\lf(\frac{2}{3}\r)^{\frac{p(np+\bz p-n)}{n}(k-k_0)}\\
&\sim\|f\|^p_{WH^p(\rn)}.
\end{align*}

The proof is completed.
\end{proof}

\begin{proof}[Proof of Theorem \ref{dl.5}]
Since the proof of Theorem \ref{dl.5} is similar to that of Theorem \ref{dl.4},
we use the same notation as in the proof of Theorem \ref{dl.4}.
Rather that give a completed proof,
we just give out the necessary modifications with respect to the estimates of ${\rm{K_1}}$ and ${\rm{K_2}}$.

For ${\rm{K_1}}$, it follows from Minkowski's inequality and the mean value theorem that
\begin{align*}
{\rm{K_1}}
&=\int_{\lf({\tilde{B_i^k}}\r)^\complement}\lf(\int_0^{|x-x^k_i|+2r^k_i}\lf|\int_{|x-y|\le t}
 \frac{\Omega(x,\,x-y)}{|x-y|^{n-\rho}}b^k_i(y)\,dy\r|^2\frac{dt}{t^{2\rho+1}}\r)^{1/2}\,dx \\
&\leq \int_{\lf({\tilde{B_i^k}}\r)^\complement}\lf[\int_{B^k_i}\lf|\frac{\Omega(x,\,x-y)}{|x-y|^{n-\rho}}b^k_i(y)\r|
 \lf(\int_{|x-y|}^{|x-x^k_i|+2r^k_i}\frac{dt}{t^{2\rho+1}}\r)^{{1/2}}\,dy\r]\,dx \\
&\ls 2^k \int_{\lf({\tilde{B_i^k}}\r)^\complement}\lf(\int_{B^k_i}\frac{\lf|\Omega(x,\,x-y)\r|}{|x-y|^{n-\rho}}
 \lf|\frac{1}{|x-y|^{2\rho}}-\frac{1}{({|x-x^k_i|+2r^k_i})^{2\rho}}\r|^{{1/2}}\,dy\r)\,dx \\
&\ls 2^k \lf(r^k_i\r)^{1/2}\int_{\lf({\tilde{B_i^k}}\r)^\complement}
 \lf(\int_{B^k_i}\frac{\lf|\Omega(x,\,x-y)\r|}{|x-y|^{n+{1/2}}}\,dy\r)\,dx,
\end{align*}
which, together with the same argument as that used in ${\rm{J_{21}}}$ of Theorem \ref{dl.2}, implies that
\begin{align*}
{\rm{K_1}} \ls 2^k \lf|B^k_i\r|\lf(\frac{2}{3}\r)^\frac{k-k_0}{2n}.
\end{align*}

For ${\rm{K_2}}$, it is apparent from $t>|x-x^k_i|+2r^k_i$ that $B^k_i\subset \{y\in\rn: \ |x-y|\le t\}$.
From this, vanishing moments of $b^k_i$, and Minkowski's inequality for integrals,
we deduced that
\begin{align*}
\mathrm{K_2}
&=\int_{\lf({\tilde{B_i^k}}\r)^\complement}\lf[\int_{|x-x^k_i|+2r}^\fz\lf|\int_{|x-y|\le t}
 \lf(\frac{\Omega(x,\,x-y)}{|x-y|^{n-\rho}}-\frac{\Omega(x,\,x-x^k_i)}{|x-x^k_i|^{n-\rho}}\r)b^k_i(y)\,dy\r|^2
 \frac{dt}{t^{2\rho+1}}\r]^{1/2}\,dx \\
&\le \int_{\lf({\tilde{B_i^k}}\r)^\complement}\lf[\int_{B^k_i}
 \lf|\frac{\Omega(x,\,x-y)}{|x-y|^{n-\rho}}-\frac{\Omega(x,\,x-x^k_i)}{|x-x^k_i|^{n-\rho}}\r|\lf|b^k_i(y)\r|
 \lf(\int_{|x-x^k_i|}^\fz\frac{dt}{t^{2\rho+1}}\r)^{1/2}\,dy\r]\,dx \\
&\le C\int_{\lf({\tilde{B_i^k}}\r)^\complement}\lf(\int_{B^k_i}
 \lf|\frac{\Omega(x,\,x-y)}{|x-y|^{n-\rho}}-\frac{\Omega(x,\,x-x^k_i)}{|x-x^k_i|^{n-\rho}}\r|
 \frac{\lf|b^k_i(y)\r|}{|x-x^k_i|^\rho}\,dy\r)\,dx \\
&\le C\int_{\lf({\tilde{B_i^k}}\r)^\complement}\int_{B^k_i}
 \lf|\frac{\Omega(x,\,x-y)}{|x-x^k_i|^\rho|x-y|^{n-\rho}}-\frac{\Omega(x,\,x-y)}{|x-y|^{n}}\r|\lf|b^k_i(y)\r|\,dy\,dx \\
&\hs+C\int_{\lf({\tilde{B_i^k}}\r)^\complement}\int_{B^k_i}
 \lf|\frac{\Omega(x,\,x-y)}{|x-y|^{n}}-\frac{\Omega(x,\,x-x^k_i)}{|x-x^k_i|^{n}}\r|\lf|b^k_i(y)\r|\,dy\,dx=:C(\mathrm{K_{21}+K_{22}}).
\end{align*}

Below, we will give the estimates of $\mathrm{K_{21}}$ and $\mathrm{K_{22}}$, respectively.

For ${\rm{K_{21}}}$, an argument similar to the one used in ${\rm{J_{21}}}$ of Theorem \ref{dl.2} shows that
\begin{align*}
{\rm{K_{21}}}\ls2^k \lf|B^k_i\r|\lf(\frac{2}{3}\r)^\frac{k-k_0}{2n}.
\end{align*}

We are now turning to the estimate of $\mathrm{J_{22}}$. The H\"{o}rmander-type condition \eqref{p2} yields
\begin{align*}
\mathrm{K_{22}}
&=\int_{\lf({\tilde{B_i^k}}\r)^\complement}\int_{B^k_i}
\lf|\frac{\Omega(x,\,x-y)}{|x-y|^{n}}-\frac{\Omega(x,\,x-x^k_i)}{|x-x^k_i|^{n}}\r|\lf|b^k_i(y)\r|\,dy\,dx \\
&\ls2^k\int_{|y-x^k_i|<r^k_i}\int_{|x-x^k_i|\ge8(3/2)^\frac{k-k_0}{n}r^k_i}
\lf|\frac{\Omega(x,\,x-y)}{|x-y|^{n}}-\frac{\Omega(x,\,x-x^k_i)}{|x-x^k_i|^{n}}\r|\,dx\,dy \\
&\sim 2^k\int_{|y|<r^k_i}\int_{|x|\ge8(3/2)^\frac{k-k_0}{n}r^k_i}
\lf|\frac{\Omega(x+x^k_i,\,x-y)}{|x-y|^{n}}-\frac{\Omega(x+x^k_i,\,x)}{|x|^{n}}\r|\,dx\,dy \\
&\ls 2^k\int_{|y|<r^k_i}\lf(\frac23\r)^\frac{k-k_0}{n}\,dy \ls2^k \lf|B^k_i\r|\lf(\frac{2}{3}\r)^\frac{k-k_0}{2n}.
\end{align*}

Collecting the estimates of $\mathrm{K_{1}}$, $\mathrm{K_{21}}$ and $\mathrm{K_{22}}$, we know that
\begin{align*}
\mathrm{K_{1}}+\mathrm{K_{21}}+\mathrm{K_{22}}\ls2^k \lf|B^k_i\r|\lf(\frac{2}{3}\r)^\frac{k-k_0}{2n},
\end{align*}
which implies that
\begin{align*}
\lz\lf|\lf\{x\in{\lf(A_{k_0}\r)^\complement}: \ \mu_\Omega^\rho(F_2)(x)>\lz\r\}\r|
&\ls\sum_{k=k_0+1}^{\fz}\sum_i(\mathrm{K_1+K_{21}+K_{22}}) \\
&\ls\sum_{k=k_0+1}^{\fz}\sum_i2^k \lf|B^k_i\r|\lf(\frac{2}{3}\r)^\frac{k-k_0}{2n} \\
&\sim\|f\|_{WH^1(\rn)}.
\end{align*}

The proof is completed.
\end{proof}

\begin{proof}[Proof of Theorem \ref{dl.6}]
Proceeding as in the proof of \cite[Theorem 1.5]{dls04},
it is quite believable that Theorem \ref{dl.6} may also be
true for parametric case, but to limit the length of this article,
we leave the details to the interested reader.
\end{proof}

\begin{proof}[Proof of Corollary \ref{tl.7}]
By an argument similar to that used in \cite[Remark 1.8]{dls04},
we can easily carry out the proof of this corollary, the details being omitted.
\end{proof}


%
%
%
%
%


\noindent Li Bo

\medskip

%

\noindent{E-mail }: \texttt{bli.math@outlook.com}

\bigskip

\end{document}